\documentclass[12pt, reqno]{amsart}
\makeatletter
\@namedef{subjclassname@1991}{$\mathrm{1991}$ Mathematics Subject Classification}
\@namedef{subjclassname@2000}{$\mathrm{2000}$ Mathematics Subject Classification}
\@namedef{subjclassname@2010}{$\mathrm{2010}$ Mathematics Subject Classification}
\@namedef{subjclassname@2020}{$\mathrm{2020}$ Mathematics Subject Classification}
\makeatother
\usepackage{amsmath,amsthm, amscd, amsfonts, amssymb, graphicx, color}
\usepackage[bookmarksnumbered, colorlinks, plainpages,linkcolor=blue,urlcolor=blue,citecolor=blue]{hyperref}
\textwidth 14 cm \textheight 19 cm
\oddsidemargin 1.5cm \evensidemargin 1.5cm
\setcounter{page}{1}

%------------------------------------------------------------------------------------%

\newtheorem{thm}{Theorem}[section]
\newtheorem{cor}[thm]{Corollary}
\newtheorem{lem}[thm]{Lemma}

\numberwithin{equation}{section}

%------------------------------------------------------------------------------------%

\begin{document}

\title{New additive results on *-DMP elements in Banach algebras}

\author{Fatemeh Zamiri}
\author{Ali Ghaffari}
\author{Marjan Sheibani$^*$}
\address{
Department of Mathematics\\ Statistics and Computer Science, Semnan University, Semnan, Iran}
\email{<aghaffari@semnan.ac.ir>}
\address{
Department of Mathematics\\ Statistics and Computer Science, Semnan University, Semnan, Iran}
\email{<f.zamiri.math@gmail.com>}
\address{Farzanegan Campus, Semnan University, Semnan, Iran}
\email{<m.sheibani@semnan.ac.ir>}

\subjclass[2020]{47A05, 46L05, 15A09.}\keywords{pseudo core inverse; *-DMP element; additive property; block complex matrix; Banach algebra.}

\begin{abstract} We present new additive results for the *-DMP elements in a Banach algebra with involution. The necessary and sufficient conditions
under which the sum of two perturbed *-DMP elements in a Banach *-algebra is a *-DMP element are obtained. As an application, the *-DMP block complex matrices are investigated. These also extend the main results of Gao, Chen and Ke (Filomat, {\bf 32}(2018), 3073--3085).
\end{abstract}

\thanks{Corresponding author: Marjan Sheibani}

\maketitle

\section{Introduction}

An involution of a Banach algebra $\mathcal{A}$ is an anti-automorphism whose square is the identity map $1$. A Banach algebra $\mathcal{A}$ with involution $*$ is called a Banach *-algebra, e.g., $C^*$-algebra. Let $\mathcal{A}$ be a Banach *-algebra.
An element $a\in \mathcal{A}$ has Drazin inverse provided that there exists $x\in \mathcal{A}$ such that $$xa^{k+1}=a^k, ax^2=x, ax=xa,$$ where $k$ is the index of $a$ (denoted by $i(a)$), i.e., the smallest $k$ such that the previous equations are satisfied. Such $x$ is unique if exists, denoted by $a^{D}$, and called the Drazin inverse of $a$. An element $a\in \mathcal{A}$ has core inverse if and only if there exists $x\in \mathcal{A}$ such that $$a=axa, x\mathcal{A}=a\mathcal{A}, \mathcal{A}x=\mathcal{A}a^*.$$ If such $x$ exists, it is unique, and denote it by $a^{\tiny\textcircled{\#}}$ (see~\cite{Mi}).
 Recently, the notion of pseudo core inverse was introduced which is an extension of Drazin and core inverses (see~\cite{GC}). An element $a\in \mathcal{A}$ has p-core inverse (i.e., pseudo core inverse) if there exist $x\in \mathcal{A}$ and $k\in \Bbb{N}$ such that $$xa^{k+1}=a^k, ax^2=x, (ax)^*=ax.$$ If such $x$ exists, it is unique, and denote it by $a^{\tiny\textcircled{D}}$.

 An element $a$ in a *-Banach algebra $\mathcal{A}$ is *-DMP if there exists some $n\in {\Bbb N}$ such that $a^n$ has More-Penrose and group inverses and $(a^n)^{\dag}=(a^n)^{\#}$ (see~\cite{GC3}). We list several characterizations of *-DMP elements in a Banach *-algebra $\mathcal{A}$.

\begin{thm} (see~\cite[Theorem 2.2]{CZ} and \cite[Theorem 2.9]{GC3}) Let $\mathcal{A}$ be a Banach *-algebra, and let $a\in \mathcal{A}$. Then the following are equivalent:\end{thm}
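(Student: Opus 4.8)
The plan is to funnel every condition on the list through the Pierce decomposition supplied by the Drazin inverse and to recognise the essential content as the single statement ``the invertible part of $a$ is EP.'' So I would start by assuming $a$ is Drazin invertible with $k=i(a)$ and putting $p=aa^{D}=a^{D}a$, an idempotent which a priori need not be self-adjoint. Splitting $\mathcal{A}$ along $p$ and $1-p$ writes $a=a_{1}\oplus a_{2}$ with $a_{1}=pap$ invertible in the corner $p\mathcal{A}p$ and $a_{2}=(1-p)a(1-p)$ nilpotent of index $k$; consequently $a^{n}=a_{1}^{n}\oplus 0$ already has a group inverse $a_{1}^{-n}\oplus 0$ for every $n\ge k$. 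Hence $a$ is $*$-DMP precisely when some (equivalently, for $k=i(a)$, every) power $a^{n}$ with $n\ge k$ is $*$-regular and satisfies $(a^{n})^{\dagger}=(a^{n})^{\#}$, and the real work is to show that each remaining condition is an equivalent repackaging of this.

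For the implications starting from ``$a$ is $*$-DMP'', I would first observe that $a^{k}$ being EP forces its (always self-adjoint) range projection $a^{k}(a^{k})^{\dagger}$ to coincide with $(a^{k})^{\dagger}a^{k}$, and since both have the same one-sided ideals as $p=aa^{D}$, they must equal $p$; thus $p$ is self-adjoint, the corner $p\mathcal{A}p$ is a unital $*$-subalgebra in which $a_{1}$ is invertible, and so the MP, group and ordinary inverses of $a_{1}$ all coincide. From this decomposition I can then (i) exhibit the p-core inverse explicitly — the candidate being $a^{D}$ once $p=p^{*}$, or more robustly $a^{D}a^{k}(a^{k})^{\dagger}$ — and verify the three defining identities $xa^{k+1}=a^{k}$, $ax^{2}=x$, $(ax)^{*}=ax$ summand by summand; (ii) read off any absorption/``$*$-commuting'' identities such as $aa^{D}=(aa^{D})^{*}$, $a^{D}=a^{D}aa^{D}$, $(a^{k})^{\dagger}a^{k}=a^{k}(a^{k})^{\dagger}$; and (iii) produce any block-matrix normal form by displaying the $2\times 2$ shape directly. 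For the converse directions I would run this backwards: from a p-core inverse $x$ of $a$ one recovers the Drazin inverse (the equations for $x$ deliver the core-EP decomposition, hence $a^{D}$), and whichever extra hypothesis is attached then pins down $aa^{D}=(aa^{D})^{*}$, after which $a^{k}$ is EP by the corner argument above — its MP inverse and group inverse both being $a_{1}^{-k}\in p\mathcal{A}p$. Uniqueness of the inverses involved is the standard two-candidates-agree-on-each-summand argument, and I would also invoke the known characterisations of EP elements (via $a\mathcal{A}=a^{*}\mathcal{A}$ or $aa^{\dagger}=a^{\dagger}a$) to shorten the last step.

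The step I expect to be delicate is exactly the passage between the (generally non-self-adjoint) spectral idempotent $p=aa^{D}$ and the involution: one must check that the MP inverse and the group inverse of $a^{k}$ genuinely lie in the corner cut out by $p$, and that it is the \emph{conjunction} of ``$a^{k}$ is $*$-regular'' with ``$a^{k}$ is EP'' — not either alone — that upgrades $p$ to a projection. It is tempting, and wrong, to treat $p$ as self-adjoint from the outset; keeping that honest is where the several conditions genuinely make contact. Once this is settled, the remainder is bookkeeping, and I would organise the proof as a single cycle $(1)\Rightarrow(2)\Rightarrow\cdots\Rightarrow(1)$ rather than proving all pairwise equivalences.
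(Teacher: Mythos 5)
The paper does not actually prove this theorem: it is quoted verbatim from \cite[Theorem 2.2]{CZ} and \cite[Theorem 2.9]{GC3}, so there is no in-paper argument to measure you against. Judged on its own, your strategy --- form $p=aa^{D}$, split $a=a_{1}\oplus a_{2}$ with $a_{1}$ invertible in the corner and $a_{2}$ nilpotent, and recognise that every condition is a disguise for ``$p$ is self-adjoint'' --- is the right mechanism and would carry you cleanly through $(1)\Leftrightarrow(2)\Leftrightarrow(3)\Leftrightarrow(5)$. You also correctly flag the one honest subtlety, namely that $p$ is not self-adjoint for free. Two caveats, one minor and one serious.

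Minor: you assume Drazin invertibility at the outset, but for a cycle $(1)\Rightarrow\cdots\Rightarrow(1)$ each of (1), (3), (4), (5) must first be shown to \emph{deliver} $a\in\mathcal{A}^{D}$ before $p=aa^{D}$ exists (for (3) and (4) this rests on \cite[Theorem 2.5]{GC}, for (5) on the explicit construction $a^{D}=(a+e)^{-1}(1-e)$); this should be said, not presupposed. Serious: condition (4) is the one item your plan never actually engages, and it is precisely the one that ``whichever extra hypothesis is attached then pins down $aa^{D}=(aa^{D})^{*}$'' cannot dispatch. Indeed, as printed, (4) asserts $a^{\tiny\textcircled{D}}=(a^{*})^{\tiny\textcircled{D}}$; once $a$ is *-DMP both sides reduce to Drazin inverses, so (4) would force $a^{D}=(a^{D})^{*}$, which already fails for an invertible non-normal matrix (every invertible element is *-DMP, yet $a^{-1}\neq(a^{*})^{-1}$ in general). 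The condition intended in \cite{GC3} is $a^{\tiny\textcircled{D}}=\bigl((a^{*})^{\tiny\textcircled{D}}\bigr)^{*}$, i.e.\ the pseudo core inverse coincides with the dual pseudo core inverse. Your cycle would therefore break at the step into or out of (4): either prove the corrected statement (for which your corner decomposition does work, since $(a^{*})^{\tiny\textcircled{D}}=(a^{D})^{*}$ once $p=p^{*}$, and taking adjoints returns $a^{D}=a^{\tiny\textcircled{D}}$), or record that (4) as written is not equivalent to the others.
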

\begin{enumerate}
\item [(1)]{\it $a$ is *-DMP.}
\item [(2)]{\it $a\in \mathcal{A}^D$ and $a^{\pi}$ is a projection.}
\item [(3)]{\it $a\in \mathcal{A}^{\tiny\textcircled{D}}$ and $a^{\tiny\textcircled{D}}=a^D$.}
\item [(4)]{\it $a, a^*\in \mathcal{A}^{\tiny\textcircled{D}}$ and $a^{\tiny\textcircled{D}}=(a^*)^{\tiny\textcircled{D}}$.}
\item [(5)]{\it There exists a projection $e\in \mathcal{A}$ such that $ae=ea, a+e\in \mathcal{A}^{-1}$ and $ae$ is nilpotent.}
\end{enumerate}

*-DMP elements in a Banach *-algebra were extensively studied by many authors (see~\cite{CZ,GC1,GC3, MD1, W,Z}). The
aim of this paper is to present new additive results for the *-DMP elements in a Banach *-algebra. The necessary and sufficient conditions
under which the sum of two perturbed *-DMP elements in a Banach *-algebra is a *-DMP element are established.
Let $a,b\in \mathcal{A}$ be *-DMP. If $ab=ba=0$ and $a^*b=0$, then $a+b\in \mathcal{A}$ is *-DMP (see~\cite[Theorem 2.16]{GC3}). In Section 2, we extend the preceding result and provide the necessary and sufficient conditions so that $a+b$ is *-DMP under perturbed conditions $a^{\pi}ab=a^{\pi}ba=0$ and $a^{\pi}a^*b=0$. In Section 3, we establish the equivalent conditions under which the sum of *-DMP elements is a *-DMP element under perturbed conditions $a^{\pi}ab=a^{\pi}ba$ and $a^{\pi}a^*b=a^{\pi}ba^*$. Let $C^{n\times n}$ be the Banach *-algebra of $n\times n$ complex matrices, with conjugate transpose as the involution.
As an application, in Section 4, we apply our preceding results to complex matrices and obtain various conditions under which a block complex matrix is *-DMP.

Throughout the paper, all Banach *-algebras are complex with an identity. $p\in \mathcal{A}$ is a projection provided that $p^2=p=p^*$.
We use $\mathcal{A}^{D}$ and $\mathcal{A}^{\tiny\textcircled{D}}$ to denote the set of all Drazin and p-core invertible elements in $\mathcal{A}$.
$A^*$ stands for the conjugate transpose $\overline{A}^T$ of the complex matrix $A$.

\section{Orthogonal perturbations}

This section is devoted to investigate the additive results of two *-DMP elements in a Banach *-algebra under the orthogonal perturbations. We begin with

\begin{lem} (see~\cite[Theorem 2.16]{GC3})) Let $a,b\in \mathcal{A}$ be *-DMP. If $ab=ba=0$ and $a^*b=0$, then $a+b\in \mathcal{A}$ is *-DMP.\end{lem}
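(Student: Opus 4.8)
The plan is to verify the two conditions of Theorem 1.1(2) for $a+b$, namely Drazin invertibility together with $(a+b)^{\pi}$ being a projection. Since $a,b\in\mathcal{A}^{D}$ and $ab=ba=0$, the first condition is the classical orthogonal additivity of the Drazin inverse: I would put $x=a^{D}+b^{D}$ and verify the three defining identities, the only inputs being the consequences $ab^{D}=ba^{D}=a^{D}b=b^{D}a=a^{D}b^{D}=b^{D}a^{D}=0$ of $ab=ba=0$ and the fact that $(a+b)^{n}=a^{n}+b^{n}$ for every $n\geq 1$. This gives $(a+b)^{D}=a^{D}+b^{D}$ and, as a by-product of the same computation, $(a+b)(a+b)^{D}=aa^{D}+bb^{D}$, hence $(a+b)^{\pi}=1-aa^{D}-bb^{D}$.

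The heart of the argument is then to show $1-aa^{D}-bb^{D}$ is a projection, and here is where the *-DMP hypotheses on $a$ and $b$ and the orthogonality condition $a^{*}b=0$ come in. Because $a$ and $b$ are *-DMP, $a^{\pi}=1-aa^{D}$ and $b^{\pi}=1-bb^{D}$ are projections by Theorem 1.1(2), so $aa^{D}$ and $bb^{D}$ are self-adjoint idempotents; self-adjointness of $(a+b)^{\pi}$ is then immediate. Idempotency of $1-aa^{D}-bb^{D}$ reduces exactly to the two cross terms $aa^{D}\cdot bb^{D}$ and $bb^{D}\cdot aa^{D}$ vanishing. The first is handled by $ab=0$ alone (it forces $a^{D}b=0$, so $aa^{D}bb^{D}=a(a^{D}b)b^{D}=0$). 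For the second I would use that $bb^{D}$ is a projection to write $bb^{D}=(b^{D})^{*}b^{*}$, and that $a^{*}b=0$ gives $b^{*}a=0$; hence $bb^{D}a=(b^{D})^{*}(b^{*}a)=0$ and so $bb^{D}aa^{D}=0$. With both cross terms gone, $(a+b)^{\pi}$ is an idempotent, self-adjoint element, i.e. a projection, and Theorem 1.1(2) then yields that $a+b$ is *-DMP.

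I do not anticipate a real obstacle: the argument is essentially bookkeeping once one notices that for *-DMP elements $aa^{D}$ and $bb^{D}$ are honest projections, and that the two orthogonality hypotheses are tailored so that $ab=0$ kills $aa^{D}bb^{D}$ while the involutive condition $a^{*}b=0$ kills $bb^{D}aa^{D}$. If one preferred to avoid computing $(a+b)^{D}$ explicitly, a second route is Theorem 1.1(5): take projections $e,f$ commuting with $a,b$ respectively, with $a+e,b+f$ invertible and $ae,bf$ nilpotent, and assemble a single projection adapted to $a+b$ from $e$ and $f$; but this still requires the same orthogonality to control how $e$ and $f$ interact, and is less transparent than the spectral-idempotent computation above.
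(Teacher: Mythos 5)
Your argument is correct, and it is worth noting that the paper itself offers no proof of this statement at all: Lemma 2.1 is simply quoted from \cite[Theorem 2.16]{GC3}. What you have supplied is therefore a self-contained verification rather than a reproduction of anything in the text. Your route --- establish $(a+b)^{D}=a^{D}+b^{D}$ from the classical orthogonal additivity of the Drazin inverse, deduce $(a+b)^{\pi}=1-aa^{D}-bb^{D}$, and then check that this is a self-adjoint idempotent --- is sound at every step: $ab=ba=0$ does force all the cross terms $a^{D}b$, $b^{D}a$, $ab^{D}$, $ba^{D}$, $a^{D}b^{D}$, $b^{D}a^{D}$ to vanish (each via an identity such as $a^{D}b=(a^{D})^{2}ab$); the *-DMP hypotheses make $aa^{D}$ and $bb^{D}$ projections via Theorem 1.1(2), which gives self-adjointness of $(a+b)^{\pi}$; and the two cross terms in the idempotency computation are killed exactly as you say, $aa^{D}bb^{D}=a(a^{D}b)b^{D}=0$ from $ab=0$, and $bb^{D}aa^{D}=(b^{D})^{*}(b^{*}a)a^{D}=0$ from $a^{*}b=0$ together with self-adjointness of $bb^{D}$. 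This is also very much in the spirit of how the present paper uses Theorem 1.1(2) elsewhere (e.g.\ in Theorem 3.2), so your proof integrates naturally with the surrounding material and has the additional merit of making the lemma independent of the external reference.
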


Let $a, p^2=p\in \mathcal{A}$. Then $a$ has the Pierce decomposition relative to $p$, and we denote it by $\left(\begin{array}{cc}
a_{11}&a_{12}\\
a_{21}&a_{22}
\end{array}
\right)_p$. We now derive

\begin{lem} \end{lem}
\begin{enumerate}
\item [(1)] Let $x=\left(
\begin{array}{cc}
a&b\\
0&d
\end{array}
\right).$ Then $x\in M_2(\mathcal{A})$ is *-DMP if and only if $a,d\in \mathcal{A}$ are *-DMP and $$\sum\limits_{i=1}^{m}a^{m-i}bd^{i}d^{\pi}=0, \sum\limits_{i=1}^{m}a^ia^{\pi}bd^{m-i}=0$$ for some $m\geq max\{ i(a), i(d)\}$.
\item [(2)] Let $p$ be a projection and $x=\left(
\begin{array}{cc}
a&b\\
0&d
\end{array}
\right)_p.$ Then $x\in \mathcal{A}$ is *-DMP if and only if $a\in p\mathcal{A}p$ and $d\in p^{\pi}\mathcal{A}p^{\pi}$ are *-DMP and
$$\sum\limits_{i=1}^{m}a^{m-i}bd^{i}d^{\pi}=0, \sum\limits_{i=1}^{m}a^ia^{\pi}bd^{m-i}=0$$ for some $m\geq max\{ i(a), i(d)\}$.\end{enumerate}
\begin{proof} $(1)$ $\Longrightarrow $ In view of Theorem 1.1, $x\in \mathcal{A}^{\tiny\textcircled{D}}$. In view of ~\cite[Theorem 2.5]{GC}, $x^m\in \mathcal{A}^{\tiny\textcircled{\#}}$, where $m=i(x)$. Then $m\geq i(a)$. In this case, $(x^m)^{\tiny\textcircled{\#}}=(x^{\tiny\textcircled{D}})^m$ and $x^{\tiny\textcircled{D}}=x^{m-1}(x^m)^{\tiny\textcircled{\#}}$.
By using Theorem 1.1 again, $x^{\tiny\textcircled{D}}=x^D$. In light of ~\cite[Theorem 5.1]{DS},
we can write $x^{\tiny\textcircled{D}}=\left(
\begin{array}{cc}
*&*\\
0&*
\end{array}
\right),$ and so $(x^{\tiny\textcircled{D}})^m=\left(
\begin{array}{cc}
*&*\\
0&*
\end{array}
\right).$ This implies that $(x^m)^{\tiny\textcircled{\#}}=\left(
\begin{array}{cc}
*&*\\
0&*
\end{array}
\right).$ Obviously, we have $x^m=\left(
\begin{array}{cc}
a^m&b_m\\
0&d^m
\end{array}
\right)_p,$ where $b_1=b, b_m=ab_{m-1}+bd^{m-1}.$
By induction,we verify that $$b_m=\sum\limits_{i=1}^{m}a^{i-1}bd^{m-i}=\sum\limits_{i=1}^{m}a^{m-i}bd^{i-1}.$$
In light of~\cite[Theorem 2.5]{XS}, $a^m,d^m\in \mathcal{A}^{\tiny\textcircled{\#}}$ and $(a^m)^{\pi}b_m=0$.
This implies that $$\sum\limits_{i=1}^{m}a^{i-1}a^{\pi}bd^{m-i}=a^{\pi}\sum\limits_{i=1}^{m}a^{i-1}bd^{m-i}=0.$$

Since $x^m\in \mathcal{A}_{\tiny\textcircled{\#}}$, it follows by~\cite[Theorem 2.9]{XS} that $b_m(d^m)^{\pi}=0$, i.e., $b_md^{\pi}=0$.
Then $\sum\limits_{i=1}^{m}a^{m-i}bd^{i-1}d^{\pi}=0$.
In view of ~\cite[Theorem 2.5]{GC}, $a,d\in \mathcal{A}^{\tiny\textcircled{D}}$. Moreover, we have $a^{\tiny\textcircled{D}}=a^D$. It follows by Theorem 1.1 that
$a$ is *-DMP. Likewise, $d\in \mathcal{A}$ is *-DMP, as desired.

$\Longleftarrow $ Since $a,d\in \mathcal{A}$ are *-DMP, it follows by Theorem 1.1 and ~\cite[Theorem 2.5]{GC} that $a^k,d^k\in \mathcal{A}^{\tiny\textcircled{\#}}\bigcap \mathcal{A}_{\tiny\textcircled{\#}}$, where $k=max\{i(a),i(b)\}$. Write $x^k=\left(
\begin{array}{cc}
a^k&b_k\\
0&d^k
\end{array}
\right),$ where $b_1=b, b_k=ab_{k-1}+bd^{k-1}.$ As in the preceding discussion,
$a^{\pi}b_k=0$ and $b_kd^{\pi}=0$ and $b_k=\sum\limits_{i=1}^{k}a^{i-1}bd^{k-i}=\sum\limits_{i=1}^{k}a^{k-i}bd^{i-1}$. In light of~\cite[Theorem 2.5]{XS}, $x^k\in \mathcal{A}^{\tiny\textcircled{\#}}$. In this case,
$$(x^k)^{\tiny\textcircled{\#}}=\left(
\begin{array}{cc}
(a^k)^{\tiny\textcircled{\#}}&-(a^k)^{\tiny\textcircled{\#}}b_k(d^k)^{\tiny\textcircled{\#}}\\
0&(d^k)^{\tiny\textcircled{\#}}
\end{array}
\right).$$ Thus, we have
$$\begin{array}{rll}
x^{\tiny\textcircled{D}}&=&\left(
\begin{array}{cc}
a^{\tiny\textcircled{D}}&-a^{\tiny\textcircled{D}}b_kd^{\tiny\textcircled{D}}\\
0&d^{\tiny\textcircled{D}}
\end{array}
\right)\\
&=&\left(
\begin{array}{cc}
a^D&-a^Db_kd^D\\
0&d^D
\end{array}
\right).
\end{array}$$
In view of~\cite[Theorem 5.1]{DS}, we have
$x^D=\left(
\begin{array}{cc}
a^D&z\\
0&d^D
\end{array}
\right),$ where $$\begin{array}{rll}
z&=&\sum\limits_{n=0}^{r(d)-1}(a^D)^{n+2}b_kd^{\pi}d^n+\sum\limits_{n=0}^{r(a)-1}a^na^{\pi}b_k(d^D)^{n+2}-a^Db_kd^D\\
&=&-a^Db_kd^D.
\end{array}$$ We infer that $x^{\tiny\textcircled{D}}=x^D$. According to Theorem 1.1, $x\in \mathcal{A}$ is *-DMP.

$(2)$ $\Longrightarrow $ Since $x\in \mathcal{A}^{\tiny\textcircled{D}}$, we have $x^m\in \mathcal{A}^{\tiny\textcircled{\#}}$, where $m=i(x)\geq i(a)$. Moreover,
$(x^m)^{\tiny\textcircled{\#}}=(x^{\tiny\textcircled{D}})^m$ and $x^{\tiny\textcircled{D}}=x^{m-1}(x^m)^{\tiny\textcircled{\#}}$.
Since $p^{\pi}x^{\tiny\textcircled{D}}p=0$, we can write $x^{\tiny\textcircled{D}}=\left(
\begin{array}{cc}
*&*\\
0&*
\end{array}
\right)_p,$ and so $(x^{\tiny\textcircled{D}})^m=\left(
\begin{array}{cc}
*&*\\
0&*
\end{array}
\right)_p.$ This implies that $p^{\pi}(x^{\tiny\textcircled{D}})^mp=0.$ Hence $p^{\pi}(x^m)^{\tiny\textcircled{\#}}p=0.$
Write $x^m=\left(
\begin{array}{cc}
a^m&b_m\\
0&d^m
\end{array}
\right)_p,$ where $b_1=b, b_m=ab_{m-1}+bd^{m-1}.$
Then $b_m=\sum\limits_{i=1}^{m}a^{i-1}bd^{m-i}$.
Similarly to ~\cite[Theorem 2.5]{XS}, we see that $a^m,d^m\in p\mathcal{A}^{\tiny\textcircled{\#}}p\subseteq (p\mathcal{A}p)^{\tiny\textcircled{\#}}$ and $(a^m)^{\pi}b_m=0$. In view of ~\cite[Theorem 2.5]{GC}, $a\in (p\mathcal{A}p)^{\tiny\textcircled{D}}$ and $d\in (p^{\pi}\mathcal{A}p^{\pi})^{\tiny\textcircled{D}}$.
As in the preceding discussion, we have $$\sum\limits_{i=1}^{m}a^{m-i}bd^{i}d^{\pi}=0, \sum\limits_{i=1}^{m}a^ia^{\pi}bd^{m-i}=0$$ for some $m\geq max\{ i(a), i(d)\}$.

$\Longleftarrow $ Since $a\in (p\mathcal{A}p)^{\tiny\textcircled{D}},d\in (p^{\pi}\mathcal{A}p^{\pi})^{\tiny\textcircled{D}}$, it follows by~\cite[Theorem 2.5]{GC} that $a^k\in (p\mathcal{A}p)^{\tiny\textcircled{\#}},d^k\in (p^{\pi}\mathcal{A}p^{\pi})^{\tiny\textcircled{\#}}$, where $k=max\{i(a),i(b)\}$. Write $x^k=\left(
\begin{array}{cc}
a^k&b_k\\
0&d^k
\end{array}
\right)_p,$ where $b_1=b, b_k=ab_{k-1}+bd^{k-1}.$ Then $b_m=\sum\limits_{i=1}^{m}a^{i-1}bd^{m-i}$.
By hypothesis, we have $a^{\pi}b_m=0$. As in the preceding discussion,
we have $a^{\pi}b_k=0$. In light of~\cite[Theorem 2.5]{XS}, $x^k\in \mathcal{A}^{\tiny\textcircled{\#}}$. According to~\cite[Theorem 2.5]{GC}, $x\in \mathcal{A}^{\tiny\textcircled{D}}$. Furthermore, $p^{\pi}(x^k)^{\tiny\textcircled{\#}}p=0$, and so $p^{\pi}(x^{\tiny\textcircled{D}})^kp=0$,
Write $(x^{\tiny\textcircled{D}})^k=\left(
\begin{array}{cc}
*&*\\
0&*
\end{array}
\right)_p$. Then $x^{\tiny\textcircled{D}}=x^{k-1}(x^{\tiny\textcircled{D}})^k=\left(
\begin{array}{cc}
*&*\\
0&*
\end{array}
\right)_p.$ This implies that $p^{\pi}x^{\tiny\textcircled{D}}p=0$, as asserted.\end{proof}

We come now to the demonstration for which this section has been developed.

\begin{thm} Let $a,b, a^{\pi}b\in \mathcal{A}$ be *-DMP. If $a^{\pi}ab=a^{\pi}ba=a^{\pi}a^*b=0$, then the following are equivalent:\end{thm}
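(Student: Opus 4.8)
The plan is to pass to the Pierce decomposition of $a$ and $b$ relative to the spectral idempotent of $a$, and then assemble the conclusion from Lemma 2.1 and Lemma 2.2. Since $a$ is *-DMP, Theorem 1.1 gives $a\in\mathcal{A}^{D}$ with $p:=a^{\pi}$ a projection; put $e:=1-p=aa^{D}$, which is then also a projection. Because $a$ commutes with $aa^{D}$ and $e^{*}=e$, both $a$ and $a^{*}$ are diagonal relative to $e$, with blocks $a_{1}=ae$ (and $a_{1}^{*}$) invertible in $e\mathcal{A}e$ and $a_{2}=ap$ (and $a_{2}^{*}$) with $a_{2}$ nilpotent in $p\mathcal{A}p$. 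Write $b$ relative to $e$ with blocks $b_{1}=ebe$, $b_{2}=pbp$, $b_{3}=ebp$, $b_{4}=pbe$.

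Next I would translate the three hypotheses into this decomposition. Expressing $a^{\pi}ba=pba$, $a^{\pi}ab=pab$ and $a^{\pi}a^{*}b=pa^{*}b$ in block form and using that $a_{1}$ is invertible, the relations $a^{\pi}ab=a^{\pi}ba=a^{\pi}a^{*}b=0$ are equivalent to $b_{4}=0$ together with $a_{2}b_{2}=b_{2}a_{2}=a_{2}^{*}b_{2}=0$. In particular $b$ is upper triangular relative to $e$, and since $b_{4}=0$ one has $a^{\pi}b=pb=b_{2}$; hence $b_{2}$, being a corner element equal to the given *-DMP element $a^{\pi}b$, is *-DMP in $p\mathcal{A}p$. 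As $a_{2}$ is nilpotent (hence *-DMP) and $a_{2}b_{2}=b_{2}a_{2}=a_{2}^{*}b_{2}=0$, Lemma 2.1 applied inside the Banach *-algebra $p\mathcal{A}p$ (with unit $p$ and the inherited involution) shows that $a_{2}+b_{2}$ is *-DMP.

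Finally, $a+b$ is upper triangular relative to $e$ with diagonal blocks $a_{1}+b_{1}\in e\mathcal{A}e$, $a_{2}+b_{2}\in p\mathcal{A}p$ and corner $b_{3}$, so Lemma 2.2(2) applies: $a+b$ is *-DMP if and only if $a_{1}+b_{1}$ and $a_{2}+b_{2}$ are *-DMP and the two alternating sums $\sum_{i=1}^{m}(a_{1}+b_{1})^{m-i}b_{3}(a_{2}+b_{2})^{i}(a_{2}+b_{2})^{\pi}$ and $\sum_{i=1}^{m}(a_{1}+b_{1})^{i}(a_{1}+b_{1})^{\pi}b_{3}(a_{2}+b_{2})^{m-i}$ vanish for some $m\ge\max\{i(a_{1}+b_{1}),i(a_{2}+b_{2})\}$. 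The condition on $a_{2}+b_{2}$ is now free, a corner element is *-DMP in $\mathcal{A}$ precisely when it is so in $e\mathcal{A}e$ (a routine consequence of Theorem 1.1(2)), and $b_{4}=0$ yields $(a+b)e=e(a+b)e$ and $p(a+b)=p(a+b)p$, whence $(a_{1}+b_{1})^{k}=(a+b)^{k}aa^{D}$, $(a_{2}+b_{2})^{k}=a^{\pi}(a+b)^{k}$ and $b_{3}=aa^{D}ba^{\pi}$; substituting these turns the displayed data into the conditions listed in the theorem, and any further equivalent formulation in the list follows from the same translation. \textbf{The main obstacle} is the bookkeeping that converts the three algebraic identities into the clean block relations $b_{4}=0$ and $a_{2}b_{2}=b_{2}a_{2}=a_{2}^{*}b_{2}=0$, together with checking that Lemma 2.1 may legitimately be invoked inside the corner algebra $p\mathcal{A}p$; once these are in place, Lemma 2.2 delivers the equivalences directly.
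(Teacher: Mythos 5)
Your proposal is correct and follows essentially the same route as the paper: Pierce decomposition relative to the projection $aa^{D}$, Lemma 2.1 applied to the orthogonal pair $aa^{\pi}$ (nilpotent) and $a^{\pi}b$ to handle the $(2,2)$-corner, and Lemma 2.2 for the resulting upper-triangular form, followed by the same translation back to the stated conditions. The differences are purely notational (you call the projection $e=aa^{D}$ and index the blocks differently).
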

\begin{enumerate}
\item [(1)] $a+b\in \mathcal{A}$ is *-DMP.
\vspace{-.5mm}
\item [(2)] $(a+b)aa^D\in \mathcal{A}$ is *-DMP and $$\begin{array}{rll}
\sum\limits_{i=1}^{m}(a+b)^{m-i}[aa^D,b](a^i+b^i)a^{\pi}b^{\pi}&=&0,\\
\sum\limits_{i=1}^{m}(a+b)^i(a+b)^{\pi}[aa^D,b](a^{m-i}+b^{m-i})&=&0,
\end{array}$$ for some $m\geq max\{i(a), i(a+b)\}$.
\end{enumerate}
\begin{proof} Let $p=aa^D$. By hypothesis, $p^{\pi}bp=(1-aa^D)baa^D=(a^{\pi}ba)a^D=0$. So we get $$a=\left(
\begin{array}{cc}
a_1&0\\
0&a_4
\end{array}
\right)_p, b=\left(
\begin{array}{cc}
b_1&b_2\\
0&b_4
\end{array}
\right)_p.$$
Hence $$a+b=\left(
\begin{array}{cc}
a_1+b_1&b_2\\
0&a_4+b_4
\end{array}
\right)_p.$$

We see that $a_1=a^2a^D$ and $b_1=aa^Dbaa^D=baa^D$. Then
$$a_1+b_1=(a+b)aa^D.$$ Moreover, we check that
$$\begin{array}{rll}
(a_1+b_1)^{i}&=&(a+b)^iaa^D,\\
(a_1+b_1)^D&=&(a+b)^Daa^D,\\
(a_1+b_1)^{\pi}&=&1-(a+b)(a+b)^Daa^D.
\end{array}$$

Also we have $a_4=aa^{\pi}$ and $b_4=a^{\pi}ba^{\pi}=a^{\pi}b$, and so
$$a_4+b_4=a^{\pi}a+a^{\pi}b.$$

Then we check that
$$\begin{array}{rll}
(a_4+b_4)^{i}&=&a^{\pi}(a^i+b^i),\\
(a_4+b_4)^D&=&a^{\pi}b^D,\\
(a_4+b_4)^{\pi}&=&1-a^{\pi}bb^D.
\end{array}$$

Clearly, $a^{\pi}a$ is *-DMP. Moreover, we have
$$\begin{array}{rll}
(a^{\pi}a)(a^{\pi}b)&=&0, \\
(a^{\pi}b)(a^{\pi}a)&=&0,\\
(a^{\pi}a)^*(a^{\pi}b)&=&0.
\end{array}$$ In view of Lemma 2.1, $a_4+b_4$ is *-DMP.

In light of Lemma 2.2, $a+b$ is *-DMP if and only if
$a_1+b_1$ is *-DMP and
$$\begin{array}{rll}
\sum\limits_{i=1}^{m}(a_1+b_1)^{m-i}b_2(a_4+b_4)^{i}(a_4+b_4)^{\pi}&=&0,\\
\sum\limits_{i=1}^{m}(a_1+b_1)^i(a_1+b_1)^{\pi}b_2(a_4+b_4)^{m-i}&=&0,
\end{array}$$ for some $m\geq max\{ i(a_1+b_1), i(a_4+b_4)\}$.
Therefore $a+b$ is *-DMP if and only if $(a+b)aa^D$ is *-DMP and
$$\begin{array}{rll}
\sum\limits_{i=1}^{m}(a+b)^{m-i}[aa^D,b](a^i+b^i)a^{\pi}b^{\pi}&=&0,\\
\sum\limits_{i=1}^{m}(a+b)^i(a+b)^{\pi}[aa^D,b](a^{m-i}+b^{m-i})&=&0,
\end{array}$$ where $m\geq max\{i(a), i(a+b)\}$, This completes the proof.\end{proof}

An element $a$ in $\mathcal{A}$ is EP if $a\in \mathcal{A}^{\#}\bigcap \mathcal{A}^{\dag}$ and $a^{\#}=a^{\dag}$. Evidently, $a$ in $\mathcal{A}$ is EP if and only if
$a\in \mathcal{A}^{\#}$ and $(aa^{\#})^*=aa^{\#}$ (see~\cite[Theorem 1.2]{MDK}).

\begin{cor} Let $a,b,a^{\pi}b\in \mathcal{A}$ be EP. If $a^{\pi}ba=0$, then the following are equivalent:\end{cor}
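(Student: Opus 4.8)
The plan is to obtain the corollary as the ``index one'' specialization of Theorem~2.3. Two facts drive this: by Theorem~1.1, an EP element is *-DMP (if $a\in\mathcal{A}^{\#}$ then $a^{\pi}=1-aa^{\#}$ is a projection), and, conversely, a *-DMP element of index at most one is EP. First I would verify that the orthogonality hypotheses of Theorem~2.3 are automatic here: for an EP element $a$ one has $a^{\pi}a=a-aa^{\#}a=0$ and $aa^{\pi}=a-a^{2}a^{\#}=0$, and, as $a^{\pi}$ is self-adjoint, $a^{\pi}a^{*}=(aa^{\pi})^{*}=0$; hence $a^{\pi}ab=0$ and $a^{\pi}a^{*}b=0$, while $a^{\pi}ba=0$ is the standing assumption. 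Since $a,b,a^{\pi}b$ are moreover *-DMP, Theorem~2.3 applies with $p=aa^{\#}=aa^{D}$.

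Next I would specialize the block decomposition of Theorem~2.3. With $p=aa^{\#}$ one gets $a_{1}=a^{2}a^{\#}=a$, $a_{4}=aa^{\pi}=0$, $a_{1}+b_{1}=(a+b)aa^{\#}$ and $a_{4}+b_{4}=a^{\pi}b$, so that
$$a+b=\left(\begin{array}{cc}(a+b)aa^{\#}&aa^{\#}ba^{\pi}\\0&a^{\pi}b\end{array}\right)_{p},$$
with $a^{\pi}b$ EP, hence of index at most one. Because both corner entries then have index at most one, the two sums appearing in Theorem~2.3(2) degenerate: each summand carries a factor $(a_{4}+b_{4})^{i}(a_{4}+b_{4})^{\pi}$ or $(a_{1}+b_{1})^{i}(a_{1}+b_{1})^{\pi}$ with $i\ge 1$, which vanishes, so Theorem~2.3 says precisely that $a+b$ is *-DMP if and only if $(a+b)aa^{\#}$ is *-DMP. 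It remains to trade ``*-DMP'' for ``EP'' on both sides. If $a+b$ is EP, comparing $(1,1)$-entries in $a+b=(a+b)^{2}(a+b)^{D}$ yields $(a+b)aa^{\#}=\bigl((a+b)aa^{\#}\bigr)^{2}\bigl((a+b)aa^{\#}\bigr)^{D}$, so $(a+b)aa^{\#}$ has index at most one and, being *-DMP, is EP, while $(a+b)(a+b)^{\pi}=0$ delivers the residual off-diagonal identity of part (2). Conversely, if $(a+b)aa^{\#}$ is EP then it is *-DMP, hence so is $a+b$ by Theorem~2.3; computing $(a+b)(a+b)^{\pi}$ from the explicit upper-triangular Drazin inverse furnished by Lemma~2.2 and \cite[Theorem 5.1]{DS}, and using $i\bigl((a+b)aa^{\#}\bigr)\le 1$ and $i(a^{\pi}b)\le 1$, everything cancels except one $(1,2)$-block, which is exactly the quantity required to vanish in part (2); thus $i(a+b)\le 1$ and $a+b$ is EP. Putting the two directions together proves the equivalence.

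The step I expect to be the genuine obstacle is this last upgrade from *-DMP to EP: it is not formal, since a block upper-triangular element built from group-invertible corners need not be group invertible, so one has to compute with the closed form of the Drazin inverse to isolate the single residual block that governs $i(a+b)$, and then rewrite it, via $aa^{\#}ba^{\pi}=[aa^{\#},b]a^{\pi}$ and the expressions for the relevant spectral idempotents obtained in the proof of Theorem~2.3, into the shape stated in part (2). The remaining ingredients — the automatic orthogonality and the transport of the block data back to $a$ and $b$ — are routine once Theorem~2.3 and Lemma~2.2 are in hand.
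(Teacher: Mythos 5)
Your overall strategy (reduce to Theorem~2.3, then upgrade *-DMP to EP via group invertibility) matches the paper's, but the direction $(2)\Rightarrow(1)$ contains a genuine gap, and it is visible as an internal inconsistency in your own text. You first argue that the two displayed sums in condition (2) are vacuous because every summand carries a factor $(a_4+b_4)^{i}(a_4+b_4)^{\pi}$ or $(a_1+b_1)^{i}(a_1+b_1)^{\pi}$ with $i\ge 1$ and both corners are group invertible; you then assert that the single residual $(1,2)$-block obstructing $i(a+b)\le 1$ is ``exactly the quantity required to vanish in part (2).'' Both cannot hold: a vacuous hypothesis controls nothing. Indeed, carrying out the computation you sketch with $c=(a+b)aa^{\#}$, $d=a^{\pi}b$, $b_2=aa^{\#}ba^{\pi}$, the residual block is $c^{\pi}b_2d^{\pi}$, and under the literal reading of (2) nothing forces it (or the conditions $a^{\pi}b_m=0$, $b_md^{\pi}=0$ needed in Lemma~2.2) to vanish. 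Concretely, in ${\Bbb C}^{2\times 2}$ take $a=\left(\begin{array}{cc}1&0\\0&0\end{array}\right)$ and $b=\left(\begin{array}{cc}-1&1\\0&1\end{array}\right)$: then $a$, $b$, $a^{\pi}b$ are EP, $a^{\pi}ba=0$, $(a+b)aa^{\#}=0$ is EP, and both displayed sums are zero (the first because $b^{\pi}=0$, the second because $a+b$ is idempotent), yet $a+b=\left(\begin{array}{cc}0&1\\0&1\end{array}\right)$ is a non-selfadjoint idempotent, hence not EP and not even *-DMP. So the implication you are proving is false as you read condition (2).

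The root of the trouble is an off-by-one in the exponents of the displayed conditions: the proof of Lemma~2.2 derives them from $a^{\pi}b_m=0$ and $b_md^{\pi}=0$ with $b_m=\sum_{i=1}^{m}a^{i-1}bd^{m-i}=\sum_{i=1}^{m}a^{m-i}bd^{i-1}$, so they should involve $d^{i-1}d^{\pi}$ and $a^{i-1}a^{\pi}$. With that reading the sums do \emph{not} degenerate in the EP setting: the surviving ($i=1$ and $i=m$) terms are precisely $b_2d^{\pi}=0$ and $c^{\pi}b_2=0$, which rule out the example above, give the *-DMP property via Lemma~2.2, and kill $c^{\pi}b_2d^{\pi}$ so that your index argument goes through. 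Once that is repaired, your route is a legitimate variant of the paper's: the paper transfers group invertibility between $a+b$ and $(a+b)aa^{\#}$ by citing \cite[Theorem 2.1]{MD} rather than by computing the triangular Drazin inverse explicitly, and it leaves implicit your (correct) verification that EP-ness of $a$ makes the hypotheses $a^{\pi}ab=a^{\pi}a^{*}b=0$ of Theorem~2.3 automatic.
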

\begin{enumerate}
\item [(1)] $a+b\in \mathcal{A}$ is EP.
\vspace{-.5mm}
\item [(2)] $(a+b)aa^{\#}\in \mathcal{A}$ is EP and $$\begin{array}{rll}
\sum\limits_{i=1}^{m}(a+b)^{m-i}[aa^D,b](a^i+b^i)a^{\pi}b^{\pi}&=&0,\\
\sum\limits_{i=1}^{m}(a+b)^i(a+b)^{\pi}[aa^D,b](a^{m-i}+b^{m-i})&=&0,
\end{array}$$ where $m\geq max\{i(a), i(a+b)\}$.
\end{enumerate}
\begin{proof} $(1)\Rightarrow (2)$ Since $a+b\in \mathcal{A}$ is EP, it is *-DMP and has group inverse.
In view of Theorem 2.3, $(a+b)aa^{\#}\in \mathcal{A}$ is *-DMP and $$\begin{array}{rll}
\sum\limits_{i=1}^{m}(a+b)^{m-i}[aa^D,b](a^i+b^i)a^{\pi}b^{\pi}&=&0,\\
\sum\limits_{i=1}^{m}(a+b)^i(a+b)^{\pi}[aa^D,b](a^{m-i}+b^{m-i})&=&0,
\end{array}$$ where $m\geq max\{i(a), i(a+b)\}$. In view of ~\cite[Theorem 2.1]{MD}, $(a+b)aa^{\#}\in \mathcal{A}^{\#}$.
Set $w=(a+b)aa^{\#}$. By virtue of Theorem 1.1, $w^{\tiny\textcircled{D}}=w^D=w^{\#}.$ Moreover,
$w^{\tiny\textcircled{D}}w=w^Dw$ is a projection, and then so is $w^{\#}w$. According to Theorem 1.1, $w$ is EP, as desired.

$(2)\Rightarrow (1)$ Since $w=(a+b)aa^{\#}\in \mathcal{A}$ is EP, it is *-DMP. In light of Theorem 2.3, $a+b\in \mathcal{A}$ is *-DMP.
In view of ~\cite[Theorem 2.1]{MD}, $a+b$ has group inverse. As in the preceding discussion, $a+b$ is EP, and therefore we complete the proof.\end{proof}

\section{commutating perturbations}

In this section we determine necessary and sufficient conditions under which the sum of two commutating perturbed *-DMP elements is *-DMP. For future use,, we now record the following.

\begin{lem} (see~\cite[Theorem 2.15]{GC3}) Let $a,b\in \mathcal{A}$ be *-DMP. If $ab=ba$ and $a^*b=ba^*$, then $ab\in \mathcal{A}$ is *-DMP.\end{lem}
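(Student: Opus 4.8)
The plan is to reduce the claim about $ab$ being *-DMP to an application of characterization (5) in Theorem 1.1, exploiting that each of $a$ and $b$ separately carries a spectral projection. First I would invoke Theorem 1.1(5) for $a$: there is a projection $e$ with $ae=ea$, $a+e$ invertible, and $ae$ nilpotent. Similarly Theorem 1.1(5) for $b$ gives a projection $f$ with $bf=fb$, $b+f\in\mathcal{A}^{-1}$, and $bf$ nilpotent. The natural candidate projection for $ab$ is then $g = e + f - ef = 1 - (1-e)(1-f)$, provided $e$ and $f$ commute and $ef$ is a projection — which should follow because $e = 1 - aa^{D}$ and $f = 1 - bb^{D}$ are polynomials (in the Drazin-inverse sense) in $a$ and $b$ respectively, so the hypotheses $ab=ba$ and $a^{*}b=ba^{*}$ propagate to $ef=fe$ and $(ef)^{*}=ef$. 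Checking $e^{*}=e$, $f^{*}=f$ and self-adjointness of $ef$ is where $a^{*}b=ba^{*}$ is essential: one needs $e$ to commute with $b$ and $b^{*}$, and similarly $f$ with $a$ and $a^{*}$.

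The key steps in order: (i) record the two projections $e,f$ from Theorem 1.1(5); (ii) show $e,f$ commute with each other and with $a,b$ (and their adjoints) using that $e\in\overline{\mathrm{alg}}(a,a^{*})$-type arguments, so that $g=e+f-ef$ is a genuine projection commuting with both $a$ and $b$, hence with $ab$; (iii) verify $ab + g \in \mathcal{A}^{-1}$; (iv) verify $(ab)g$ is nilpotent; then (v) conclude by Theorem 1.1(5) that $ab$ is *-DMP. For step (iii), note that on the corner $g\mathcal{A}g$ one wants $ab+g$ to restrict to something invertible, and on $(1-g)\mathcal{A}(1-g)=(1-e)(1-f)\mathcal{A}(1-e)(1-f)$ the element $ab$ restricts to $(a(1-e))(b(1-f))$, a product of two elements that are invertible in that corner (since $a+e$ invertible forces $a(1-e)$ invertible in $(1-e)\mathcal{A}(1-e)$, and likewise for $b$), so $ab$ itself is invertible there; combining the two corners gives $ab+g$ invertible globally. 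For step (iv), on $g\mathcal{A}g$ write $g = e + f - ef$ and observe $(ab)g = ab(e+f-ef)$; using commutativity one factors $abe$ through the nilpotent $ae$ and $abf$ through the nilpotent $bf$, and since everything in sight commutes, a suitable power of $(ab)g$ vanishes — here one would take, say, the max of the two nilpotency indices and multiply out, using $e,f,a,b$ all commuting pairwise on the relevant corner.

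The main obstacle I expect is step (ii): establishing that the spectral projections $e$ and $f$ commute with each other, which really requires knowing $e$ lies in the double commutant of $\{a,a^{*}\}$ (or at least commutes with anything commuting with both $a$ and $a^{*}$) and symmetrically for $f$. In a general Banach *-algebra one has $e = 1 - aa^{D}$ and $aa^{D}$ is a limit of polynomials in $a$ with no constant term only when $0$ is isolated in a suitable sense; the cleaner route is to cite the double-commutant property of the Drazin inverse ($a^{D}$ commutes with every element commuting with $a$) together with the fact that $e$ being a projection and *-DMP means $e$ commutes with $a^{*}$ as well — this is exactly where the hypothesis $a^{*}b=ba^{*}$ (rather than merely $ab=ba$) is consumed. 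Once commutativity and self-adjointness of $g$ are in hand, steps (iii)–(v) are routine corner computations. Alternatively, and perhaps more in keeping with the paper's style, one could instead phrase the whole argument through $2\times 2$ Pierce decompositions relative to $e$ and then relative to $f$ within the corners, reducing to invertible-plus-nilpotent block triangular forms and applying Lemma 2.2-type bookkeeping; I would keep that as a fallback if the projection algebra in step (ii) proves delicate.
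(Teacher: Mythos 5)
The paper offers no proof of this lemma at all --- it is quoted verbatim from Gao--Chen--Ke [GC3, Theorem 2.15] --- so there is no internal argument to measure yours against; judged on its own, your proposal is essentially a correct, self-contained Banach-algebra proof. Taking $e=a^{\pi}=1-aa^{D}$ and $f=b^{\pi}=1-bb^{D}$, these are projections by Theorem 1.1(2); from $ab=ba$ and the double-commutant property of the Drazin inverse (the paper's reference [D, Theorem 2.2], which it invokes in Theorem 3.3) one gets $a^{D}b=ba^{D}$, hence $aa^{D}$ commutes with $b$ and therefore with $bb^{D}$, so $ef=fe$ and $g=e+f-ef=1-aa^{D}bb^{D}$ is a projection commuting with $ab$. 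Your corner computations then go through: $(ab)^{n}g=0$ for $n\geq\max\{i(a),i(b)\}$ because $(ab)^{n}e=b^{n}(a^{n}a^{\pi})=0$, and $ab$ restricted to $(1-g)\mathcal{A}(1-g)$ has inverse $a^{D}b^{D}(1-g)$ there, so $ab+g$ is invertible. The one imprecision is your step (ii): you do not need $e$ to lie in the double commutant of $\{a,a^{*}\}$, nor to commute with $b^{*}$. Self-adjointness of $e$ and $f$ is already part of the *-DMP hypothesis via Theorem 1.1(2), and $(ef)^{*}=f^{*}e^{*}=fe=ef$ then follows from commutativity alone; only the ordinary double-commutant property of $a^{D}$ (applied twice) is required. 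Consequently your argument never actually consumes the hypothesis $a^{*}b=ba^{*}$ --- in the Banach *-algebra setting, $ab=ba$ alone already gives $(ab)^{\pi}=1-aa^{D}bb^{D}$ self-adjoint, so your proof establishes a marginally stronger statement than the cited one (the extra hypothesis is presumably needed in the more general *-semigroup/*-ring setting of [GC3], where one cannot lean on Theorem 1.1(2)).
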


\begin{thm} Let $a,b\in \mathcal{A}$ be *-DMP. If $ab=ba$ and $a^*b=ba^*$, then the following are equivalent:\end{thm}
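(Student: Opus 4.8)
The plan is to Peirce-decompose $a$ and $b$ at the spectral idempotent $p=aa^{D}$ and thereby reduce the equivalence to a statement about the $p$-corner, the $p^{\pi}$-corner turning out to be automatically $*$-DMP. Since $a$ is $*$-DMP, $a^{\pi}=1-p$ is a projection, so $p=p^{*}=p^{2}$. From $ab=ba$ one gets $a^{D}b=ba^{D}$, hence $b$ commutes with $p$; taking adjoints in $a^{*}b=ba^{*}$ gives $ab^{*}=b^{*}a$, so $b^{*}$ (and likewise $a^{*}$) also commutes with $p$. Thus $a$ and $b$ are simultaneously diagonal with respect to $p$:
$$a=\left(\begin{array}{cc} a_{1}&0\\ 0&a_{4}\end{array}\right)_{p},\qquad b=\left(\begin{array}{cc} b_{1}&0\\ 0&b_{4}\end{array}\right)_{p},$$
where $a_{1}=a^{2}a^{D}$ is invertible in $p\mathcal{A}p$ (with inverse $a^{D}$), $a_{4}=a^{\pi}a$ is nilpotent, $b_{1}=aa^{D}b$ and $b_{4}=a^{\pi}b$. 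Hence $a+b=\left(\begin{array}{cc} a_{1}+b_{1}&0\\ 0&a_{4}+b_{4}\end{array}\right)_{p}$ with $a_{1}+b_{1}=(a+b)aa^{D}$ and $a_{4}+b_{4}=a^{\pi}(a+b)$.

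The main work is to show that the lower corner $a_{4}+b_{4}=a^{\pi}(a+b)$ is $*$-DMP, with no extra hypothesis. Inside $p^{\pi}\mathcal{A}p^{\pi}$ the element $a_{4}$ is nilpotent, while $b_{4}$, being the $p^{\pi}$-corner of the $*$-DMP element $b$ (with $b$ and $b^{*}$ commuting with $p^{\pi}$), is $*$-DMP there: indeed $b^{\pi}$ commutes with $p^{\pi}$, the projection $p^{\pi}b^{\pi}$ commutes with $b_{4}$, the element $b_{4}+p^{\pi}b^{\pi}=p^{\pi}(b+b^{\pi})p^{\pi}$ is invertible in $p^{\pi}\mathcal{A}p^{\pi}$, and $b_{4}\,p^{\pi}b^{\pi}$ is nilpotent, so characterization (5) of Theorem 1.1 applies. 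One also checks $a_{4}b_{4}=b_{4}a_{4}$ and $a_{4}^{*}b_{4}=b_{4}a_{4}^{*}$ from $ab=ba$, $a^{*}b=ba^{*}$ and $(a^{\pi})^{*}=a^{\pi}$. Next I would decompose once more, inside $p^{\pi}\mathcal{A}p^{\pi}$, at the projection $q=b_{4}b_{4}^{D}$: there $b_{4}=\mathrm{diag}(u,v)_{q}$ with $u$ invertible in $q\mathcal{A}q$ and $v$ nilpotent, $a_{4}=\mathrm{diag}(s,t)_{q}$ with $s,t$ nilpotent, so $a_{4}+b_{4}=\mathrm{diag}(s+u,\,t+v)_{q}$. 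The first block is a commuting sum of an invertible and a nilpotent, hence invertible (so EP, so $*$-DMP), and the second block is nilpotent (so $*$-DMP); by Lemma 2.2(2) with vanishing off-diagonal entry, $a_{4}+b_{4}$ is $*$-DMP.

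To assemble the equivalence: since $a+b=\mathrm{diag}(a_{1}+b_{1},\,a_{4}+b_{4})_{p}$ has zero off-diagonal block, Lemma 2.2(2) gives that $a+b$ is $*$-DMP if and only if $a_{1}+b_{1}\in p\mathcal{A}p$ and $a_{4}+b_{4}\in p^{\pi}\mathcal{A}p^{\pi}$ are both $*$-DMP, that is, by the previous paragraph, if and only if $a_{1}+b_{1}=(a+b)aa^{D}$ is $*$-DMP in $p\mathcal{A}p$; and since $(a+b)aa^{D}$ equals $\mathrm{diag}((a+b)aa^{D},0)_{p}$ in $\mathcal{A}$, one more application of Lemma 2.2(2) shows this is equivalent to $(a+b)aa^{D}$ being $*$-DMP in $\mathcal{A}$, which is condition (2); any further equivalent condition listed in the statement (a reformulation through $a^{D}$, $aa^{D}$ or $a^{\pi}$) then follows from the same block identities. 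The genuinely delicate step is the middle one: checking that the $p^{\pi}$-corner of $b$ stays $*$-DMP under the commuting and $*$-commuting hypotheses, and that the iterated Peirce decomposition of $a^{\pi}(a+b)$ separates cleanly into an invertible block and a nilpotent block; the remainder is bookkeeping on top of Lemma 2.2.
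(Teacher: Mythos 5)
The equivalence you prove is not the one asserted by Theorem 3.2. Condition (2) there is that $1+a^Db$ is *-DMP, not that $(a+b)aa^D$ is *-DMP; the latter is the condition appearing in Theorems 2.3 and 3.3 and in Corollary 3.4. Your Peirce decomposition at $p=aa^D$ --- splitting $a+b$ into the diagonal blocks $(a+b)aa^D$ and $a^{\pi}(a+b)$, showing the second block is always *-DMP via a further decomposition at $b_4b_4^D$ into an invertible-plus-nilpotent block and a nilpotent block, and then applying Lemma 2.2(2) with vanishing off-diagonal entry --- is essentially sound, and the commutation facts you need ($a^Db=ba^D$, $b^{\pi}a^{\pi}=a^{\pi}b^{\pi}$, etc.) do follow from the hypotheses. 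But what it delivers is the statement of Corollary 3.4 under the present hypotheses. Your closing remark that any reformulation ``follows from the same block identities'' is precisely where the content of Theorem 3.2 is skipped: in the corner $p\mathcal{A}p$ one has $p+a^Db=a^D(a_1+b_1)$ with $a^D$ invertible there, so passing between ``$(a+b)aa^D$ is *-DMP'' and ``$1+a^Db$ is *-DMP'' requires showing that multiplication by this commuting, $*$-commuting invertible element preserves the *-DMP property in both directions (plus the observation that $1+a^Db$ decomposes as $a^{\pi}$ plus the corner element, with $a^{\pi}$ a projection). None of that is carried out, so as written you have proved a true but different theorem.

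For comparison, the paper's proof avoids Peirce decompositions entirely. It invokes \cite[Theorem 3]{ZC}, which under $ab=ba$ makes the Drazin invertibility of $a+b$ and of $1+a^Db$ equivalent with the explicit formula $(1+a^Db)^D=a^{\pi}+a^2a^D(a+b)^D$; it then computes $(1+a^Db)(1+a^Db)^D=1-aa^D(a+b)^{\pi}$ in one direction and $(a+b)^{\pi}=aa^D(1+a^Db)^{\pi}+a^{\pi}b^{\pi}$ in the other, and concludes by the characterization (2) of Theorem 1.1 (*-DMP iff Drazin invertible with a projection as spectral idempotent). If you supply the missing bridge between the two corner conditions, your block argument would yield an alternative self-contained proof, but the cited Drazin-inverse addition formula is doing real work in the paper's version and your proposal has no substitute for it.
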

\begin{enumerate}
\item [(1)] $a+b\in \mathcal{A}$ is *-DMP.
\vspace{-.5mm}
\item [(2)] $1+a^Db\in \mathcal{A}$ is *-DMP.
\end{enumerate}
\begin{proof} $(1)\Rightarrow (2)$ In view of Theorem 1.1, $a+b\in \mathcal{A}^D$. By virtue of ~\cite[Theorem 3]{ZC},
  $1+a^Db\in \mathcal{A}^D$ and $$(1+a^Db)^D=a^{\pi}+a^2a^D(a+b)^D.$$
Then $$\begin{array}{rll}
(1+a^Db)(1+a^Db)^D&=&a^{\pi}+(1+a^Db)a^2a^D(a+b)^D\\
&=&a^{\pi}+aa^D(a+b)(a+b)^D\\
&=&1-aa^D(a+b)^{\pi}.
\end{array}$$ Therefore $1+a^Db\in \mathcal{A}$ is *-DMP by Theorem 1.1.

$(2)\Rightarrow (1)$ Since $1+a^Db\in \mathcal{A}$ is *-DMP, it follows by Theorem 1.1 that $1+a^Db\in \mathcal{A}^D$. In view of ~\cite[Theorem 3]{ZC},
$a+b\in \mathcal{A}^D$ and $$(a+b)^D=(1+a^Db)^Da^D+b^D(1+aa^{\pi}b^D)^{-1}a^{\pi}.$$ Since $(1-a^{\pi}bb^D)(1+aa^{\pi}b^D)=1-a^{\pi}bb^D$, we have
$$(1-a^{\pi}bb^D)(1+aa^{\pi}b^D)^{-1}=1-a^{\pi}bb^D.$$ Then we check that
$$\begin{array}{rl}
&(a+b)(1+b)^D\\
=&a^D(a+b)(1+a^Db)^D+(a+b)a^{\pi}b^D(1+aa^{\pi}b^D)^{-1}\\
=&aa^D(1+a^Db)(1+a^Db)^D+(aa^{\pi}b^D+a^{\pi}bb^D)(1+aa^{\pi}b^D)^{-1}\\
=&aa^D(1+a^Db)(1+a^Db)^D+1-(1-a^{\pi}bb^D)(1+aa^{\pi}b^D)^{-1}\\
=&aa^D(1+a^Db)(1+a^Db)^D+1-[1-a^{\pi}bb^D]\\
=&aa^D(1+a^Db)(1+a^Db)^D+a^{\pi}bb^D.\\
\end{array}$$
Therefore $$\begin{array}{rll}
(a+b)^{\pi}&=&1-aa^D(1+a^Db)(1+a^Db)^D-a^{\pi}bb^D\\
&=&aa^D-aa^D(1+a^Db)(1+a^Db)^D+a^{\pi}b^{\pi}\\
&=&aa^D(1+a^Db)^{\pi}+a^{\pi}b^{\pi}.
\end{array}$$ Hence, $(a+b)^{\pi}$ is a projection.
Accordingly, $a+b\in \mathcal{A}$ is *-DMP by Theorem 1.1.\end{proof}

We are now ready to prove:

\begin{thm} Let $a,b,a^{\pi}b\in \mathcal{A}$ be *-DMP. If $a^{\pi}ab=a^{\pi}ba$ and $a^{\pi}a^*b=a^{\pi}ba^*$, then the following are equivalent:\end{thm}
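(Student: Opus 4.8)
The plan is to run the argument of Theorem 2.3 once more, the new feature being that the weaker hypothesis $a^{\pi}ab=a^{\pi}ba$ must now be \emph{made} to kill the lower off-diagonal block, rather than being handed that for free. Put $p=aa^{D}$; since $a$ is *-DMP, Theorem 1.1(2) makes $a^{\pi}=1-aa^{D}$ a projection, so $p$ is a projection, and relative to $p$
$$a=\left(\begin{array}{cc}a_{1}&0\\0&a_{4}\end{array}\right)_{p},\qquad b=\left(\begin{array}{cc}b_{1}&b_{2}\\b_{3}&b_{4}\end{array}\right)_{p},$$
where $a_{1}=a^{2}a^{D}$ is invertible in $p\mathcal{A}p$ and $a_{4}=aa^{\pi}$ is nilpotent. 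Comparing the $(2,1)$- and $(2,2)$-blocks on the two sides of $a^{\pi}ab=a^{\pi}ba$ gives $a_{4}b_{3}=b_{3}a_{1}$ and $a_{4}b_{4}=b_{4}a_{4}$; doing the same for $a^{\pi}a^{*}b=a^{\pi}ba^{*}$ gives $a_{4}^{*}b_{3}=b_{3}a_{1}^{*}$ and $a_{4}^{*}b_{4}=b_{4}a_{4}^{*}$.

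The crucial point is that $b_{3}=a^{\pi}baa^{D}=0$: iterating $a_{4}b_{3}=b_{3}a_{1}$ gives $a_{4}^{n}b_{3}=b_{3}a_{1}^{n}$ for all $n$, and taking $n\ge i(a)$ (so that $a_{4}^{n}=0$) forces $b_{3}a_{1}^{n}=0$, whence $b_{3}=0$ because $a_{1}$ is invertible in $p\mathcal{A}p$. Hence $a+b$ is block upper triangular relative to $p$,
$$a+b=\left(\begin{array}{cc}a_{1}+b_{1}&b_{2}\\0&a_{4}+b_{4}\end{array}\right)_{p},$$
and (using $b_{3}=0$ throughout) $a_{1}+b_{1}=(a+b)aa^{D}$, $b_{2}=[aa^{D},b]$, $b_{4}=a^{\pi}b$, $a_{4}+b_{4}=a^{\pi}(a+b)$, together with $(a_{1}+b_{1})^{i}=(a+b)^{i}aa^{D}$, $(a_{1}+b_{1})^{\pi}=1-(a+b)(a+b)^{D}aa^{D}$, $(a_{4}+b_{4})^{i}=a^{\pi}(a+b)^{i}$ and $(a_{4}+b_{4})^{\pi}=1-a^{\pi}bb^{D}$.

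Next I would observe that the corner term $a_{4}+b_{4}$ is automatically *-DMP: $a_{4}=aa^{\pi}$ is nilpotent (hence *-DMP, with $a_{4}^{D}=0$), $b_{4}=a^{\pi}b$ is *-DMP by hypothesis, and the relations already obtained give $a_{4}b_{4}=b_{4}a_{4}$ and $a_{4}^{*}b_{4}=b_{4}a_{4}^{*}$; so Theorem 3.2 applied to the pair $a_{4},b_{4}$ shows $a_{4}+b_{4}$ is *-DMP if and only if $1+a_{4}^{D}b_{4}=1$ is, which it is. Now Lemma 2.2(2) gives: $a+b$ is *-DMP if and only if $a_{1}+b_{1}$ is *-DMP, $a_{4}+b_{4}$ is *-DMP, and
$$\sum_{i=1}^{m}(a_{1}+b_{1})^{m-i}b_{2}(a_{4}+b_{4})^{i}(a_{4}+b_{4})^{\pi}=0,\qquad \sum_{i=1}^{m}(a_{1}+b_{1})^{i}(a_{1}+b_{1})^{\pi}b_{2}(a_{4}+b_{4})^{m-i}=0$$
for some $m\ge\max\{i(a_{1}+b_{1}),i(a_{4}+b_{4})\}$. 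The condition on $a_{4}+b_{4}$ being vacuous, substituting the identities of the previous paragraph rewrites this as: $(a+b)aa^{D}$ is *-DMP and the two sums displayed in statement (2) vanish for a suitable $m$ (the bookkeeping of index thresholds, and the absorption of the trailing $a^{\pi}$'s, being the same routine steps as in the proof of Theorem 2.3). This gives $(1)\Leftrightarrow(2)$.

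The single genuinely new obstacle is the vanishing of $b_{3}$ in the second paragraph: in Theorem 2.3 it was handed over by the stronger hypothesis $a^{\pi}ba=0$, whereas here it has to be squeezed out of $a^{\pi}ab=a^{\pi}ba$ by exploiting that, along $p=aa^{D}$, the element $a$ splits into an invertible part and a nilpotent part. After that, every step is a transcription of the proof of Theorem 2.3, with Theorem 3.2 taking over the role that Lemma 2.1 played there.
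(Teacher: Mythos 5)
Your argument is correct and follows the paper's proof essentially step for step: decompose relative to $p=aa^{D}$, identify $a_{1}+b_{1}=(a+b)aa^{D}$ and $a_{4}+b_{4}=a^{\pi}(a+b)$, dispose of the corner term via Theorem 3.2 (since $1+(aa^{\pi})^{D}(a^{\pi}b)=1$), and finish with Lemma 2.2(2). The only divergence is your derivation of $p^{\pi}bp=0$, which you obtain elementarily by iterating $a_{4}b_{3}=b_{3}a_{1}$ against the nilpotence of $a_{4}$ and the invertibility of $a_{1}$ in $p\mathcal{A}p$, whereas the paper gets the same vanishing by citing Drazin's commutation theorem to conclude $a^{D}(a^{\pi}b)=(a^{\pi}b)a^{D}$; both routes are valid and the rest is identical.
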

\begin{enumerate}
\item [(1)] $a+b\in \mathcal{A}$ is *-DMP.
\vspace{-.5mm}
\item [(2)] $(a+b)aa^D\in \mathcal{A}$ is *-DMP and $$\begin{array}{rll}
\sum\limits_{i=1}^{m}(a+b)^{m-i}[aa^D,b](a^i+b^i)a^{\pi}b^{\pi}&=&0,\\
\sum\limits_{i=1}^{m}(a+b)^i(a+b)^{\pi}[aa^D,b](a^{m-i}+b^{m-i})&=&0,
\end{array}$$ where $m\geq max\{i(a), i(a+b)\}$.
\end{enumerate}
\begin{proof} Since $a^{\pi}ab=a^{\pi}ba$, we have $a(a^{\pi}b)=(a^{\pi}b)a$. In view of ~\cite[Theorem 2.2]{D},
$a^D(a^{\pi}b)=(a^{\pi}b)a^D$. Hence, $a^{\pi}ba^D=0$. Let $p=aa^D$. Then $p^{\pi}bp=(a^{\pi}ba^D)a=0$, and then we have $$a=\left(
\begin{array}{cc}
a_1&0\\
0&a_4
\end{array}
\right)_p, b=\left(
\begin{array}{cc}
b_1&b_2\\
0&b_4
\end{array}
\right)_p.$$
Thus, $$a+b=\left(
\begin{array}{cc}
a_1+b_1&b_2\\
0&a_4+b_4
\end{array}
\right)_p.$$
Here, $a_1=a^2a^D$ and $b_1=aa^Dbaa^D=(1-a^{\pi})baa^D=baa^D$. Then
$$a_1+b_1=(a+b)aa^D.$$ We verify that
$$\begin{array}{rll}
(a_1+b_1)^{i}&=&(a+b)^iaa^D,\\
(a_1+b_1)^D&=&(a+b)^Daa^D,\\
(a_1+b_1)^{\pi}&=&1-(a+b)(a+b)^Daa^D.
\end{array}$$

Further, we have $a_4=aa^{\pi}$ and $b_4=a^{\pi}ba^{\pi}=a^{\pi}b$; hence,
$$a_4+b_4=a^{\pi}a+a^{\pi}b.$$

Then we check that
$$\begin{array}{rll}
(a_4+b_4)^{i}&=&a^{\pi}(a^i+b^i),\\
(a_4+b_4)^{\pi}&=&(aa^{\pi})^{\pi}(a^{\pi}b)^{\pi}=1-a^{\pi}bb^D.
\end{array}$$

Obviously, $a^{\pi}a$ is *-DMP. Moreover, we have
$$\begin{array}{rll}
(a^{\pi}a)(a^{\pi}b)&=&a^{\pi}ab=a^{\pi}ba=(a^{\pi}b)(a^{\pi}a), \\
(a^{\pi}a)^*(a^{\pi}b)&=&a^{\pi}a^*b=a^{\pi}ba^*=a^{\pi}ba^{\pi}a^*=(a^{\pi}b)(a^{\pi}a)^*,\\
1+(a^{\pi}a)^D(a^{\pi}b)&=&1~\mbox{is *-DMP}.
\end{array}$$ By virtue Theorem 3.2, $a_4+b_4$ is *-DMP.

In light of Theorem 2.2, $a+b$ is *-DMP if and only if
$a_1+b_1$ is *-DMP and
$$\begin{array}{rll}
\sum\limits_{i=1}^{m}(a_1+b_1)^{m-i}b_2(a_4+b_4)^{i}(a_4+b_4)^{\pi}&=&0,\\
\sum\limits_{i=1}^{m}(a_1+b_1)^i(a_1+b_1)^{\pi}b_2(a_4+b_4)^{m-i}&=&0,
\end{array}$$ for some $m\geq max\{ i(a_1+b_1), i(a_4+b_4)\}$.

For any $i\in {\Bbb N}$, we easily check that $$\begin{array}{rl}
&(a_1+b_1)^{m-i}b_2(a_4+b_4)^{i}(a_4+b_4)^{\pi}\\
=&(a+b)^{m-i}aa^Db(a^i+b^i)a^{\pi}b^{\pi},\\
&(a_1+b_1)^i(a_1+b_1)^{\pi}b_2(a_4+b_4)^{m-i}\\
=&(a+b)^i(a+b)^{\pi}aa^Dba^{\pi}(a^{m-i}+b^{m-i}).
\end{array}$$ This completes the proof.
\end{proof}

As an immediate consequence of Theorem 3.3, we now derive

\begin{cor} Let $a,b\in \mathcal{A}$ be *-DMP. If $ab=ba$ and $a^{\pi}a^*b=a^{\pi}ba^*$, then the following are equivalent:\end{cor}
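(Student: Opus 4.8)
The plan is to obtain this corollary as a direct specialization of Theorem 3.3, so that the only real work is to check that the present hypotheses imply all the hypotheses of that theorem. Theorem 3.3 requires that $a$, $b$ and $a^{\pi}b$ be *-DMP and that $a^{\pi}ab=a^{\pi}ba$ and $a^{\pi}a^{*}b=a^{\pi}ba^{*}$. Among these, $a$ and $b$ are *-DMP by assumption, the identity $a^{\pi}ab=a^{\pi}ba$ is an immediate consequence of $ab=ba$, and $a^{\pi}a^{*}b=a^{\pi}ba^{*}$ is assumed outright. Hence everything reduces to showing that $a^{\pi}b$ is *-DMP.

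To establish this I would argue as follows. Since $a$ is *-DMP, Theorem 1.1 gives $a\in\mathcal{A}^{D}$ and shows that $a^{\pi}=1-aa^{D}$ is a projection; being a self-adjoint idempotent, $a^{\pi}$ is its own group and Moore--Penrose inverse, so $a^{\pi}$ is *-DMP. From $ab=ba$ together with \cite[Theorem 2.2]{D} we obtain $a^{D}b=ba^{D}$, whence $a^{\pi}b=ba^{\pi}$; and since $(a^{\pi})^{*}=a^{\pi}$ this commutation also yields $(a^{\pi})^{*}b=b(a^{\pi})^{*}$. Thus $a^{\pi}$ and $b$ are two *-DMP elements satisfying the commutation hypotheses of Lemma 3.1, and that lemma gives that $a^{\pi}b=(a^{\pi})b$ is *-DMP.

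With all the hypotheses of Theorem 3.3 now verified, applying that theorem to the pair $a,b$ gives exactly the equivalence of $(1)$ and $(2)$ asserted in the corollary, which completes the proof.

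I do not anticipate a genuine obstacle here: the substance lies entirely in Theorem 3.3 and Lemma 3.1, and the remaining argument is a short reduction. The one step that needs a little care is the verification that $a^{\pi}b$ is *-DMP, where it is essential to use the full strength of the hypothesis that $a$ is *-DMP (not merely $a\in\mathcal{A}^{D}$), so that $a^{\pi}$ is known to be a projection, and to invoke \cite[Theorem 2.2]{D} so that the hypothesis $ab=ba$ can be upgraded to $a^{\pi}b=ba^{\pi}$ as required by Lemma 3.1.
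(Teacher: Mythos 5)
Your reduction to Theorem 3.3 is the same route the paper takes, and your verification that $a^{\pi}b$ is *-DMP is exactly right: $a^{\pi}$ is a projection (hence *-DMP and self-adjoint), $ab=ba$ upgrades to $a^{D}b=ba^{D}$ and hence $a^{\pi}b=ba^{\pi}$ by Drazin's commutation theorem, and Lemma 3.1 then applies to the pair $a^{\pi},b$. However, there is a gap in your final step. Condition $(2)$ of Theorem 3.3 is \emph{not} just ``$(a+b)aa^{D}$ is *-DMP''; it also requires the two summation identities
$$\sum_{i=1}^{m}(a+b)^{m-i}[aa^{D},b](a^{i}+b^{i})a^{\pi}b^{\pi}=0,\qquad
\sum_{i=1}^{m}(a+b)^{i}(a+b)^{\pi}[aa^{D},b](a^{m-i}+b^{m-i})=0,$$
whereas condition $(2)$ of the corollary omits them. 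So Theorem 3.3 does not give ``exactly'' the stated equivalence: the implication $(1)\Rightarrow(2)$ is fine (you may simply discard the extra conclusions), but for $(2)\Rightarrow(1)$ you must still verify those two identities before Theorem 3.3 can be applied in reverse.

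The missing observation is short, and you already have the ingredient in hand: since $a^{D}b=ba^{D}$, the commutator $[aa^{D},b]=aa^{D}b-baa^{D}$ vanishes, so both sums are identically zero and condition $(2)$ of Theorem 3.3 collapses to condition $(2)$ of the corollary. This is precisely how the paper closes the argument. Adding that one line makes your proof complete and essentially identical to the paper's.
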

\begin{enumerate}
\item [(1)] $a+b\in \mathcal{A}$ is *-DMP.
\vspace{-.5mm}
\item [(2)] $(a+b)aa^D\in \mathcal{A}$ is *-DMP.
\end{enumerate}
\begin{proof} $(1)\Rightarrow (2)$ In view of Lemma 3.1, $a^{\pi}b$ is *-DMP. Thus $(a+b)aa^D\in \mathcal{A}$ is *-DMP by Theorem 3.3.

$(2)\Rightarrow (1)$ Clearly, $a^{\pi}b$ is *-DMP. Since $ab=ba$, it follows by ~\cite[Theorem 2.2]{D} that
$a^Db=ba^D$. Then $[aa^D,b]=0$, thus yielding the result by Theorem 3.3\end{proof}

\section{applications}

Let $M=\left(
  \begin{array}{cc}
    A&B\\
    C&D
  \end{array}
\right)\in {\Bbb C}^{2n\times 2n},$ where $A,D,BC,CB\in {\Bbb C}^{n\times n}$ are *-DMP. We shall apply the preceding results
to the block matrix $M$ and establish various conditions under which $M$ is *-DMP.

\begin{lem} $\left(
  \begin{array}{cc}
    0 & B \\
    C & 0
  \end{array}
\right)$ is *-DMP.\end{lem}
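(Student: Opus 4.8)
The plan is to reduce the statement about $N:=\left(\begin{smallmatrix}0&B\\ C&0\end{smallmatrix}\right)$ to what has already been established for block diagonal matrices, by passing to $N^2$. First I would compute
$$N^2=\left(\begin{array}{cc}BC&0\\ 0&CB\end{array}\right)\in {\Bbb C}^{2n\times 2n}=M_2({\Bbb C}^{n\times n}),$$
which is block diagonal relative to the projection $p=\left(\begin{smallmatrix}I_n&0\\ 0&0\end{smallmatrix}\right)$. Since $BC$ and $CB$ are *-DMP by hypothesis and the upper-right block of $N^2$ is $0$ (so the two summation conditions of Lemma~2.2(1) hold vacuously), Lemma~2.2(1) gives that $N^2$ is *-DMP.

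Next I would transfer *-DMP-ness from $N^2$ to $N$ by means of characterization~(2) of Theorem~1.1, namely that an element is *-DMP exactly when it is Drazin invertible and its spectral idempotent is a projection. From $N^2\in({\Bbb C}^{2n\times 2n})^{D}$ one gets $N\in({\Bbb C}^{2n\times 2n})^{D}$ by the standard fact that a power $x^k$ is Drazin invertible iff $x$ is, with $x^{D}=x^{k-1}(x^k)^{D}$; and since $xx^{D}$ is idempotent and commutes with $x$, one has $x^k(x^k)^{D}=(xx^{D})^k=xx^{D}$, whence the spectral idempotents coincide: $N^{\pi}=1-NN^{D}=1-N^2(N^2)^{D}=(N^2)^{\pi}$. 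As $(N^2)^{\pi}$ is a projection (Theorem~1.1(2) applied to $N^2$), so is $N^{\pi}$, and therefore $N$ is *-DMP by Theorem~1.1(2).

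There is no real obstacle here; the argument is essentially two lines once the reduction is set up. The only points deserving a moment of care are checking that Lemma~2.2(1) degenerates correctly when the off-diagonal block vanishes, and verifying the identity $N^{\pi}=(N^2)^{\pi}$ (i.e.\ that squaring does not alter the part of the nilpotent behaviour relevant to the spectral idempotent). If one prefers to avoid Theorem~1.1(2), the same conclusion follows directly from the definition: pick $m$ large enough that $(BC)^m$ and $(CB)^m$ are both EP (group and Moore--Penrose inverses coinciding); then $N^{2m}=\left(\begin{smallmatrix}(BC)^m&0\\ 0&(CB)^m\end{smallmatrix}\right)$ satisfies $(N^{2m})^{\dag}=\left(\begin{smallmatrix}((BC)^m)^{\dag}&0\\ 0&((CB)^m)^{\dag}\end{smallmatrix}\right)=\left(\begin{smallmatrix}((BC)^m)^{\#}&0\\ 0&((CB)^m)^{\#}\end{smallmatrix}\right)=(N^{2m})^{\#}$, exhibiting $N$ as *-DMP with witness $2m$.
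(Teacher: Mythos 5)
Your proof is correct and follows essentially the same route as the paper: square the matrix to obtain the block-diagonal $\left(\begin{smallmatrix}BC&0\\0&CB\end{smallmatrix}\right)$, which is *-DMP by hypothesis, and then transfer *-DMP-ness from $Q^2$ back to $Q$. The only difference is that where the paper cites \cite[Theorem 2.14]{GC3} for this last transfer step, you supply a short direct argument via the spectral idempotent identity $N^{\pi}=(N^2)^{\pi}$ and Theorem 1.1(2), which is a valid (and self-contained) substitute.
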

\begin{proof} Let $Q=\left(
  \begin{array}{cc}
    0 & B \\
    C & 0
  \end{array}
\right)$. Then $Q^2=\left(
  \begin{array}{cc}
  BC&0\\
  0&CB
  \end{array}
\right).$ By hypothesis, $Q^2$ is *-DMP. In view of~\cite[Theorem 2.14]{GC3},
$Q$ is *-DMP, as asserted.
\end{proof}

\begin{thm} If $AB=BD, DC=CA, A^*B=BD^*, D^*C=CA^*$ and $A^DBD^DC$ is nilpotent, then $M$ is *-DMP.\end{thm}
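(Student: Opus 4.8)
The plan is to split $M$ additively as $M=P+Q$ with
$P=\left(\begin{smallmatrix}A&0\\0&D\end{smallmatrix}\right)$ and
$Q=\left(\begin{smallmatrix}0&B\\C&0\end{smallmatrix}\right)$, and to feed the pair $(P,Q)$ into Theorem 3.2. First I would note that $P$ is *-DMP: this is Lemma 2.2(1) applied with off-diagonal entry $0$, since then the two summation conditions are vacuous and $A,D$ are *-DMP by hypothesis. Next, $Q$ is *-DMP by Lemma 4.1, whose proof uses only that $BC,CB$ are *-DMP.

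Then I would check the hypotheses of Theorem 3.2 for $a=P$, $b=Q$. A one-line block multiplication gives $PQ=\left(\begin{smallmatrix}0&AB\\DC&0\end{smallmatrix}\right)$ and $QP=\left(\begin{smallmatrix}0&BD\\CA&0\end{smallmatrix}\right)$, so $PQ=QP$ follows from $AB=BD$ and $DC=CA$. Since $P^{*}=\left(\begin{smallmatrix}A^{*}&0\\0&D^{*}\end{smallmatrix}\right)$, the same computation with $A^{*},D^{*}$ in place of $A,D$ shows $P^{*}Q=QP^{*}$ follows from $A^{*}B=BD^{*}$ and $D^{*}C=CA^{*}$. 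Hence Theorem 3.2 applies, and $M=P+Q$ is *-DMP if and only if $1+P^{D}Q$ is *-DMP.

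It remains to prove $1+P^{D}Q$ is *-DMP. Since $P^{D}=\left(\begin{smallmatrix}A^{D}&0\\0&D^{D}\end{smallmatrix}\right)$, we get $1+P^{D}Q=\left(\begin{smallmatrix}I&A^{D}B\\D^{D}C&I\end{smallmatrix}\right)$. Put $X=A^{D}B$ and $Y=D^{D}C$. The hypothesis says $XY=A^{D}BD^{D}C$ is nilpotent, so $(YX)^{k+1}=Y(XY)^{k}X=0$ for large $k$; thus $YX$ is nilpotent and $I-YX$ is invertible. From the factorization $\left(\begin{smallmatrix}I&0\\-Y&I\end{smallmatrix}\right)\left(\begin{smallmatrix}I&X\\Y&I\end{smallmatrix}\right)=\left(\begin{smallmatrix}I&X\\0&I-YX\end{smallmatrix}\right)$ it follows that $1+P^{D}Q$ is invertible. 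Finally, every invertible $u$ is *-DMP: its spectral idempotent $u^{\pi}=1-uu^{-1}=0$ is a projection, so Theorem 1.1(2) applies (equivalently, take $e=0$ in Theorem 1.1(5)). Therefore $1+P^{D}Q$, and hence $M$, is *-DMP.

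All the computations here are routine; the single point carrying the content is the observation that nilpotency of $A^{D}BD^{D}C$ is exactly what forces $1+P^{D}Q$ to be invertible, after which being *-DMP is automatic. The only thing to watch is the bookkeeping: making sure the four scalar identities in the hypothesis match the two operator identities $PQ=QP$ and $P^{*}Q=QP^{*}$ demanded by Theorem 3.2, and that the block-diagonal $P$ legitimately falls under Lemma 2.2.
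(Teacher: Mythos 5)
Your proposal is correct and follows essentially the same route as the paper: the same splitting $M=P+Q$ into diagonal and antidiagonal parts, verifying $PQ=QP$ and $P^{*}Q=QP^{*}$ from the four scalar hypotheses, and reducing via Theorem 3.2 to the invertibility of $I+P^{D}Q$, which nilpotency of $A^{D}BD^{D}C$ guarantees. You merely spell out two details the paper leaves implicit (the Schur-complement factorization for invertibility, and the justification that block-diagonal and invertible elements are *-DMP), which is fine.
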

\begin{proof} Write $M=P+Q$, where $$P=\left(
  \begin{array}{cc}
    A & 0 \\
    0 & D
  \end{array}
\right), Q=\left(
  \begin{array}{cc}
   0 & B \\
   C & 0
  \end{array}
\right).$$ Since $A$ and $D$ are *-DMP, so is $P$. In view of Lemma 3.1, $Q$ is *-DMP. We easily check that
$$PQ=\left(
  \begin{array}{cc}
   0 & 0 \\
    DC & 0
  \end{array}
\right)=\left(
  \begin{array}{cc}
  0 & 0 \\
    CA & 0
  \end{array}
\right)=QP.$$ Likewise, we verify that $P^*Q=QP^*$. Moreover, we check that
$$\begin{array}{rll}
I_{m+n}+P^DQ&=&\left(
\begin{array}{cc}
I_m&A^DB\\
D^DC&I_n
\end{array}
\right).
\end{array}$$ Since $A^DBD^DC$ is nilpotent, we prove that
$I_{m+n}+P^DQ$ is invertible, and so it is *-DMP. According to Theorem 3.2, $M$ is *-DMP, as asserted.
\end{proof}

As an immediate consequence we now derive

\begin{cor} If $AB=BD, DC=CA, A^*B=BD^*, D^*C=CA^*,$ and $A^DBD^DC$ is nilpotent, then $M$ is *-DMP.\end{cor}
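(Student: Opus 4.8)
The plan is to recognize that the hypotheses collected here are \emph{verbatim} those of Theorem 4.2: the block commutation relations $AB=BD$, $DC=CA$, the $*$-commutation relations $A^*B=BD^*$, $D^*C=CA^*$, together with the nilpotency of $A^DBD^DC$, and with the same conclusion that $M$ is *-DMP. Consequently my primary move is simply to invoke Theorem 4.2; once each hypothesis is matched against that theorem, the conclusion follows with no further argument. This is exactly the sense in which the statement is advertised as ``an immediate consequence.''

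Should I wish to exhibit the mechanism rather than merely cite, I would reproduce the splitting $M=P+Q$ with $P=\left(\begin{smallmatrix} A & 0 \\ 0 & D\end{smallmatrix}\right)$ and $Q=\left(\begin{smallmatrix} 0 & B \\ C & 0 \end{smallmatrix}\right)$. Since $A,D$ are *-DMP, $P$ is *-DMP, and $Q$ is *-DMP by Lemma 4.1. From $AB=BD$ and $DC=CA$ I would read off $PQ=QP$, and from $A^*B=BD^*$ and $D^*C=CA^*$ I would read off $P^*Q=QP^*$, so that $P,Q$ satisfy the two commutation hypotheses of Theorem 3.2.

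The one potentially substantive point is then to check that $1+P^DQ$ is *-DMP. Computing block-wise gives $1+P^DQ=\left(\begin{smallmatrix} I & A^DB \\ D^DC & I \end{smallmatrix}\right)$, whose off-diagonal product is precisely $A^DB\cdot D^DC=A^DBD^DC$. Because this product is nilpotent \emph{by hypothesis}, the matrix $1+P^DQ$ is invertible: for a block matrix $\left(\begin{smallmatrix} I & X \\ Y & I \end{smallmatrix}\right)$ the Schur complement $I-XY$ is invertible whenever $XY$ is nilpotent, and an invertible element is trivially *-DMP. Applying Theorem 3.2 with $a=P$, $b=Q$ then delivers that $M=P+Q$ is *-DMP. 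I expect no genuine obstacle: because the nilpotency of $A^DBD^DC$ is assumed outright, the invertibility of $1+P^DQ$ is immediate and no preliminary derivation of that nilpotency is required, so the entire argument is a direct instance of Theorem 4.2.
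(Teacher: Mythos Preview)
Your observation is correct: the hypotheses of this corollary are literally identical to those of Theorem 4.2, so invoking Theorem 4.2 directly (or, as you also do, unpacking its proof via $M=P+Q$ and Theorem 3.2) is a complete argument.

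The paper, however, does not do this. It applies Theorem 4.2 to the \emph{swapped} block matrix $M'=\left(\begin{smallmatrix}D&C\\B&A\end{smallmatrix}\right)$: with $A'=D$, $B'=C$, $C'=B$, $D'=A$, the hypotheses of Theorem 4.2 for $M'$ read $DC=CA$, $AB=BD$, $D^*C=CA^*$, $A^*B=BD^*$, and $D^{D}CA^{D}B$ nilpotent. The first four are just a reordering of the given hypotheses, and the last is equivalent to $A^{D}BD^{D}C$ nilpotent because $XY$ is nilpotent iff $YX$ is. Hence $M'$ is *-DMP; then $M=\left(\begin{smallmatrix}0&I\\I&0\end{smallmatrix}\right)M'\left(\begin{smallmatrix}0&I\\I&0\end{smallmatrix}\right)$, and since $\left(\begin{smallmatrix}0&I\\I&0\end{smallmatrix}\right)$ is a self-adjoint unitary, this conjugation preserves the *-DMP property. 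Your direct invocation is shorter; the paper's detour through the swap is apparently intended to illustrate the permutation trick (reused in Corollaries 4.5 and 4.7), though as stated the corollary is indeed redundant with Theorem 4.2.
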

\begin{proof} Applying Theorem 4.2, $\left(
\begin{array}{cc}
D&C\\
B&A
\end{array}
\right)$ is *-DMP. Clearly,
$$M=\left(
\begin{array}{cc}
0&I\\
I&0
\end{array}
\right)\left(
\begin{array}{cc}
D&C\\
B&A
\end{array}
\right)\left(
\begin{array}{cc}
0&I\\
I&0
\end{array}
\right),$$ and so $M$ is *-DMP.\end{proof}

We are now ready to prove the following.

\begin{thm} If $AB=BD, DC=CA, B^*A=DB^*$ and $A^DBD^DC$ is nilpotent, then $M$ is *-DMP.\end{thm}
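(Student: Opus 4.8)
The plan is to mimic the structure of the proof of Theorem 4.2, writing $M=P+Q$ with $P=\mathrm{diag}(A,D)$ and $Q=\left(\begin{smallmatrix}0&B\\C&0\end{smallmatrix}\right)$, but now the hypotheses are asymmetric (we only have $B^*A=DB^*$, not the full set $A^*B=BD^*$ and $D^*C=CA^*$), so $P^*Q=QP^*$ will in general fail and Theorem 3.2 cannot be applied directly. Instead I would first verify, exactly as before, that $P$ and $Q$ are *-DMP (the latter by Lemma 4.1), and that $PQ=QP$ using $AB=BD$ and $DC=CA$. The key new idea is to work not with $P$ and $Q$ themselves but to factor out the ``core part'' of $P$: set $e=PP^D=\mathrm{diag}(AA^D,DD^D)$, a projection commuting with $P$ and with $Q$ (since $A^D,D^D$ commute with $B,C$ by \cite[Theorem 2.2]{D} applied to $AB=BD$, $DC=CA$). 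Then $M=Me+Me^{\pi}$ with $Me=Pe+Qe$ having invertible ``core'' and $Me^{\pi}=Pe^{\pi}+Qe^{\pi}$ nilpotent-like, and one checks $I+(Pe)^{D}(Qe)=I_{m+n}+P^DQ=\left(\begin{smallmatrix}I_m&A^DB\\mathbb{D}^DC&I_n\end{smallmatrix}\right)$, which is invertible precisely because $A^DBD^DC$ is nilpotent (its square on the $(1,1)$ block is $A^DBD^DC$, so the off-diagonal part is nilpotent and the matrix is a unipotent-plus-nilpotent, hence invertible).

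The heart of the argument is then to show $M$ is *-DMP by verifying condition (2) of Theorem 1.1: produce a projection. Here I would argue that $Me^{\pi}$ is nilpotent — indeed $Pe^{\pi}=\mathrm{diag}(AA^{\pi},DD^{\pi})$ is nilpotent and commutes with $Qe^{\pi}$, and $(Qe^{\pi})$ restricted to the complementary corner, whose square is $\mathrm{diag}(A^{\pi}BD^{\pi}C,\ D^{\pi}CA^{\pi}B)$ up to the relevant identifications, is nilpotent because $A^DBD^DC$ nilpotent forces $A^{\pi}BD^{\pi}C$ nilpotent via the intertwining $AB=BD$. Meanwhile $Me$ has $PP^D$-core invertible part $Pe\in (e\mathcal{A}e)^{-1}$ and the correction term $I+P^DQ$ invertible, so by the ``spectral splitting'' established through \cite[Theorem 3]{ZC} (as used in Theorem 3.2) $M\in\mathcal{A}^D$ with $MM^D=e\,(I+P^DQ)(I+P^DQ)^{D}=e$ — a projection. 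Thus $M^{\pi}=1-e=\mathrm{diag}(AA^{\pi},DD^{\pi})$ is a projection, and Theorem 1.1(2) gives that $M$ is *-DMP.

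An alternative, and perhaps cleaner, route: observe that $B^*A=DB^*$ together with $AB=BD$ is exactly the pair of intertwinings needed to run the ``one-sided'' version of the argument of Theorem 3.2 for the product structure; but since Theorem 3.2 as stated requires both $a^*b=ba^*$ and $ab=ba$ for $a=P$, $b=Q$, and we only have $B^*A=DB^*$ (which gives $Q^*Q$-type relations on one side), I expect the honest path is the direct verification above rather than a black-box appeal. Concretely I would: (i) reduce to the corner $e\mathcal{A}e$ where everything is invertible and the formula $(a+b)^{\pi}=aa^D(1+a^Db)^{\pi}+a^{\pi}b^{\pi}$ from the proof of Theorem 3.2 is available, noting $(1+P^DQ)^{\pi}=0$ here; (ii) conclude $M^{\pi}=P^{\pi}Q^{\pi}$, which equals $P^{\pi}$ since $Q^{\pi}\geq P^{\pi}$ on that corner, hence is the projection $\mathrm{diag}(AA^{\pi},DD^{\pi})$; (iii) invoke Theorem 1.1.

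The main obstacle I anticipate is justifying that the relation $B^*A=DB^*$ (only one of the four symmetric *-relations) still suffices to make $M^{\pi}$ \emph{self-adjoint}: commutativity $PQ=QP$ plus $P,Q$ both *-DMP gives $M\in\mathcal{A}^D$ and a candidate for $M^{\pi}$, but self-adjointness of that candidate is where the involution hypothesis must be spent. I would pin it down by writing $M^{\pi}=\mathrm{diag}(AA^{\pi},DD^{\pi})$ explicitly from step (ii) above — a block-diagonal of two projections, hence a projection — so the asymmetry of the *-hypothesis is absorbed into the fact that only $AB=BD$ (not the *-versions) is needed to compute $M^{\pi}$, while $B^*A=DB^*$ is what is really used earlier to guarantee $A$, $D$, $BC$, $CB$ and the off-diagonal block genuinely assemble into a *-DMP element via Lemma 4.1 and the compatibility of the Pierce decompositions with $*$. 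If that explicit identification of $M^{\pi}$ goes through, the rest is the routine bookkeeping already rehearsed in Theorems 3.2 and 4.2.
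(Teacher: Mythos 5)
Your proposal rejects the route that the paper actually takes, and the substitute argument contains a false step. The paper \emph{does} apply Theorem 3.2 to $M=P+Q$, but with the roles of the two summands reversed: it takes $a=Q$ and $b=P$, so the required involution condition $a^*b=ba^*$ becomes $Q^*P=PQ^*$, which is precisely what $B^*A=DB^*$ supplies (together with $C^*D=AC^*$, which the paper's computation also uses although it is not listed among the hypotheses). One then checks $QP=PQ$ and that $I+Q^{D}P$ is invertible because $A^DBD^DC$ is nilpotent, and Theorem 3.2 finishes. You correctly observed that $P^*Q=QP^*$ fails, but never tried the swap, and instead launched a direct computation of $M^{\pi}$.

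That computation is where the genuine gap lies. You claim $Me^{\pi}$ is nilpotent, i.e.\ $M^{\pi}=1-PP^D$, resting on the assertion that nilpotency of $A^DBD^DC$ forces nilpotency of $A^{\pi}BD^{\pi}C$. This is false: take $A=D=0$ and $B=C=I$. Every hypothesis of the theorem holds ($AB=BD=0$, $DC=CA=0$, $B^*A=DB^*=0$, $A^DBD^DC=0$ is nilpotent, and $BC=CB=I$ is *-DMP), yet $A^{\pi}BD^{\pi}C=I$ is not nilpotent; here $M=\left(\begin{smallmatrix}0&I\\I&0\end{smallmatrix}\right)$ is invertible, so $M^{\pi}=0$ while your candidate $1-e$ equals $I$. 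The same example refutes step (ii) of your alternative route: the inequality $Q^{\pi}\geq P^{\pi}$ is unjustified and fails here ($Q^{\pi}=0$, $P^{\pi}=I$), so $M^{\pi}\neq P^{\pi}$. The core and nilpotent parts of $P$ are simply not aligned with those of $M$; the identity that does come out of the Theorem 3.2 machinery is $M^{\pi}=P^{\pi}Q^{\pi}$, a product of two commuting projections, not $P^{\pi}$ or $1-PP^D$. A further, smaller issue: your appeal to \cite[Theorem 2.2]{D} to make $A^D$ commute with $B$ is misapplied, since that result concerns a commuting pair $ab=ba$, not the intertwining relation $AB=BD$ (for complex matrices the intertwined version can be recovered from the resolvent identity, but it needs its own justification).
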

\begin{proof} Write $M=P+Q$, where $$P=\left(
  \begin{array}{cc}
    A & 0 \\
    0 & D
  \end{array}
\right), Q=\left(
  \begin{array}{cc}
   0 & B \\
   C & 0
  \end{array}
\right).$$ Then we check that
$$\begin{array}{rll}
Q^*P&=&\left(
  \begin{array}{cc}
   0 & C^* \\
  B^* & 0
  \end{array}
\right)\left(
  \begin{array}{cc}
    A & 0 \\
    0 & D
  \end{array}
\right)\\
&=&\left(
  \begin{array}{cc}
    0 & C^*D \\
    B^*A & 0
  \end{array}
\right)\\
&=&\left(
  \begin{array}{cc}
    0 & AC^* \\
   DB^* & 0
  \end{array}
\right)\\
&=&\left(
  \begin{array}{cc}
    A & 0 \\
    0 & D
  \end{array}
\right)\left(
  \begin{array}{cc}
   0 & C^* \\
  B^* & 0
  \end{array}
\right)\\
&=&PQ^*.
\end{array}$$ Similarly, $QP=PQ$.
Moreover, we verify that
$$\begin{array}{rll}
I+Q^{\tiny\textcircled{D}}P&=&I+\left(
  \begin{array}{cc}
  0& (BC)^{D}BCC^{\tiny\textcircled{D}} \\
  (CB)^{D}CBB^{\tiny\textcircled{D}}&0
  \end{array}
\right)\left(
  \begin{array}{cc}
    A & 0 \\
    0 & D
  \end{array}
\right)\\
&=&\left(
\begin{array}{cc}
I&A^{\tiny\textcircled{D}}(BC)^{D}BCC^{\tiny\textcircled{D}}D\\
(CB)^{D}CBB^{\tiny\textcircled{D}}A&I
\end{array}
\right).
\end{array}$$ Since $A^DBD^DC$ is nilpotent, we prove that
$I+Q^DP$ is invertible; hence, it is *-DMP. In light of Theorem 3.2, $M$ is *-DMP.\end{proof}

\begin{cor} If $AB=BD, DC=CA, A^*B=BD^*, D^*C=CA^*$ and $A^DBD^DC$ is nilpotent, then $M$ is *-DMP.\end{cor}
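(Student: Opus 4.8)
The plan is to reduce the statement to Theorem 4.4, which already settles that $M$ is *-DMP under the hypotheses $AB=BD$, $DC=CA$, $B^*A=DB^*$, together with the nilpotency of $A^DBD^DC$. Three of these conditions — $AB=BD$, $DC=CA$, and the nilpotency of $A^DBD^DC$ — are assumed verbatim in the present corollary, so the only thing I need to check is that $B^*A=DB^*$ follows from the hypotheses at hand.

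First I would apply the involution to the assumed identity $A^*B=BD^*$. Since $*$ on ${\Bbb C}^{2n\times 2n}$ is an anti-automorphism (concretely, conjugate transpose), we have $(A^*B)^*=B^*(A^*)^*=B^*A$ and $(BD^*)^*=(D^*)^*B^*=DB^*$, so $A^*B=BD^*$ immediately yields $B^*A=DB^*$. Note that the fourth hypothesis $D^*C=CA^*$ plays no role in this argument; by the same manipulation it is equivalent to $C^*D=AC^*$, which Theorem 4.4 does not require.

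Having obtained $B^*A=DB^*$, all the hypotheses of Theorem 4.4 are in force — $AB=BD$, $DC=CA$, $B^*A=DB^*$, and $A^DBD^DC$ nilpotent — while $A,D,BC,CB$ are *-DMP by the standing assumptions of this section. Applying Theorem 4.4 then gives that $M$ is *-DMP, which completes the proof. There is essentially no obstacle here: the whole content is the observation that the identity $A^*B=BD^*$ self-dualizes to $B^*A=DB^*$ under the involution. If one preferred not to appeal to Theorem 4.4, one could instead mimic the proof of Corollary 4.3 and pass to the conjugated block matrix $\left(\begin{array}{cc}D&C\\B&A\end{array}\right)$, but the direct route via Theorem 4.4 is the shortest.
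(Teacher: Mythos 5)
Your proof is correct and rests on the same key result as the paper's, namely Theorem 4.4, but the reduction is carried out differently. The paper applies Theorem 4.4 to the permuted block matrix $\left(\begin{array}{cc} D&C\\ B&A \end{array}\right)$ — whose hypothesis $C^*D=AC^*$ is supplied by taking adjoints of $D^*C=CA^*$, and whose nilpotency condition $D^{D}CA^{D}B$ follows since $XY$ nilpotent implies $YX$ nilpotent — and then recovers $M$ by conjugating with the unitary $\left(\begin{array}{cc} 0&I\\ I&0 \end{array}\right)$, exactly as in Corollary 4.3. You instead apply Theorem 4.4 directly to $M$, obtaining the needed identity $B^*A=DB^*$ by taking adjoints of the \emph{other} starred hypothesis $A^*B=BD^*$; the standing assumption that $A,D,BC,CB$ are *-DMP covers the rest. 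Your route is shorter, skips the permutation-similarity step entirely, and makes explicit that $D^*C=CA^*$ is redundant for this corollary, whereas the paper's version instead leaves $A^*B=BD^*$ unused. Both arguments are sound; yours is the more economical of the two.
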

\begin{proof} As in the proof of Corollary 4.3, we obtain the result by applying Theorem 4.4 to the block matrix
$\left(
\begin{array}{cc}
D&C\\
B&A
\end{array}
\right)$.\end{proof}

\begin{thm} If $BC=0, CB=0, CA=DC, AC^*=C^*D$ and $$\sum\limits_{i=1}^{i(D)}A^{m-i}BD^{i-1}D^{\pi}=0, \sum\limits_{i=1}^{i(A)}A^{i-1}A^{\pi}BD^{m-i}=0,$$ then $M$ is *-DMP.\end{thm}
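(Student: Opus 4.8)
The plan is to write $M=P+Q$ with
$$P=\left(\begin{array}{cc}A&B\\0&D\end{array}\right),\qquad Q=\left(\begin{array}{cc}0&0\\C&0\end{array}\right),$$
and then to check that $P$ and $Q$ are commuting *-DMP elements of ${\Bbb C}^{2n\times 2n}$, so that Theorem 3.2 applies. First I would observe that $Q^{2}=0$, hence $Q$ is nilpotent, $Q^{D}=0$, and $Q^{\pi}=I$ is a projection; by Theorem 1.1, $Q$ is *-DMP. Next I would show that $P$ is *-DMP: by hypothesis $A$ and $D$ are *-DMP, and, writing $b_{m}=\sum_{i=1}^{m}A^{i-1}BD^{m-i}=\sum_{i=1}^{m}A^{m-i}BD^{i-1}$, the two displayed sums in the statement assert exactly that $b_{m}D^{\pi}=0$ and $A^{\pi}b_{m}=0$ (the truncations at $i(D)$ and $i(A)$ are harmless, since $D^{j}D^{\pi}=0$ for $j\ge i(D)$ and $A^{j}A^{\pi}=0$ for $j\ge i(A)$). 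These are precisely the off-diagonal conditions of Lemma 2.2$(1)$, so $P$ is *-DMP.

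The second step is to verify the two commutation identities. Using $BC=0$ and $CB=0$ one computes $PQ=\left(\begin{array}{cc}0&0\\DC&0\end{array}\right)$ and $QP=\left(\begin{array}{cc}0&0\\CA&0\end{array}\right)$, so $PQ=QP$ by the hypothesis $CA=DC$. Since $P^{*}=\left(\begin{array}{cc}A^{*}&0\\B^{*}&D^{*}\end{array}\right)$, one gets likewise $P^{*}Q=\left(\begin{array}{cc}0&0\\D^{*}C&0\end{array}\right)$ and $QP^{*}=\left(\begin{array}{cc}0&0\\CA^{*}&0\end{array}\right)$; taking adjoints in $AC^{*}=C^{*}D$ gives $CA^{*}=D^{*}C$, whence $P^{*}Q=QP^{*}$.

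Now Theorem 3.2, applied with $a=P$ and $b=Q$, tells us that $M=P+Q$ is *-DMP if and only if $I+P^{D}Q$ is *-DMP. Since $PQ=QP$, the Drazin inverse $P^{D}$ also commutes with $Q$, so
$$(P^{D}Q)^{2}=(P^{D})^{2}Q^{2}=0;$$
hence $P^{D}Q$ is nilpotent and $I+P^{D}Q$ is invertible, therefore *-DMP (an invertible element $u$ satisfies $u^{\pi}=0$). By Theorem 3.2, $M$ is *-DMP.

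The one point requiring care is the first step: matching the truncated summations in the hypothesis with the off-diagonal conditions of Lemma 2.2$(1)$ in order to conclude that $P$ is *-DMP. Everything after that is formal, the key being that $Q^{2}=0$ forces $P^{D}Q$ to be nilpotent; this is why — in contrast with Theorems 4.2 and 4.4 — no nilpotency assumption on $A^{D}BD^{D}C$ is needed to obtain invertibility of $I+P^{D}Q$.
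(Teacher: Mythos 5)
Your proof is correct and follows essentially the same route as the paper: the same splitting of $M$ into an upper-triangular summand and a strictly lower-triangular summand, with Lemma 2.2 giving that the former is *-DMP and Theorem 3.2 finishing the argument. The only difference is that you assign the roles of $a$ and $b$ in Theorem 3.2 the other way around, so you need the extra (correct) observation that $P^{D}Q$ is nilpotent because $Q^{2}=0$ and $P^{D}$ commutes with $Q$, whereas the paper takes $a$ to be the nilpotent summand, so that $a^{D}=0$ and $1+a^{D}b=1$ is trivially *-DMP.
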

\begin{proof} Write $M=P+Q$, where $$P=\left(
  \begin{array}{cc}
   0 & 0 \\
    C & 0
  \end{array}
\right), Q=\left(
  \begin{array}{cc}
   A & B \\
   0 & D
  \end{array}
\right).$$ Clearly, $P$ is *-DMP. By using Theorem 2.2, $Q$ is *-DMP.
One easily checks that
$$\begin{array}{rll}
PQ&=&\left(
  \begin{array}{cc}
   0 & 0 \\
    CA & CB
  \end{array}
\right)\\
&=&\left(
  \begin{array}{cc}
    BC & 0 \\
    DC & 0
  \end{array}
\right)\\
&=&QP;\\
P^*Q&=&\left(
  \begin{array}{cc}
    0 & C^*D \\
    0 & 0
  \end{array}
\right)\\
&=&\left(
  \begin{array}{cc}
    0 & AC^* \\
   0 & 0
  \end{array}
\right)\\
&=&QP^*.
\end{array}$$ Clearly $P^D=0$, and so $I+P^DA=I$ is *-DMP.
Accordingly, $M$ is *-DMP by Theorem 3.2.\end{proof}

\begin{cor} If $BC=0, CB=0, AB=BD, B^*A=DB^*$ and $$\sum\limits_{i=1}^{i(A)}D^{m-i}CA^{i-1}A^{\pi}=0, \sum\limits_{i=1}^{i(D)}D^{i-1}D^{\pi}CA^{m-i}=0,$$ then $M$ is *-DMP.\end{cor}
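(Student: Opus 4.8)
The plan is to deduce this exactly as Corollaries 4.3 and 4.5 were deduced from Theorems 4.2 and 4.4: apply Theorem 4.6 to the ``flipped'' block matrix
$$N=\left(\begin{array}{cc} D & C \\ B & A \end{array}\right),$$
conclude that $N$ is *-DMP, and then transfer this to $M$ by conjugation with a permutation symmetry. First I would record that $N$ meets the standing assumptions of this section: its diagonal blocks are $D$ and $A$, which are *-DMP by hypothesis, and the two products of its off-diagonal blocks are $CB$ and $BC$, again *-DMP by hypothesis.

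The substance of the argument is a bookkeeping check, namely that the hypotheses of Theorem 4.6, read for $N$ under the relabelling $A\mapsto D,\ B\mapsto C,\ C\mapsto B,\ D\mapsto A$, become precisely the hypotheses stated here. Tracking each one, ``$BC=0$'' and ``$CB=0$'' turn into $CB=0$ and $BC=0$; ``$CA=DC$'' turns into $BD=AB$, i.e. $AB=BD$; ``$AC^{*}=C^{*}D$'' turns into $DB^{*}=B^{*}A$, i.e. $B^{*}A=DB^{*}$; and the two sum conditions of Theorem 4.6 turn into
$$\sum\limits_{i=1}^{i(A)}D^{m-i}CA^{i-1}A^{\pi}=0,\qquad \sum\limits_{i=1}^{i(D)}D^{i-1}D^{\pi}CA^{m-i}=0,$$
which are the two displayed identities in the statement. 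Hence Theorem 4.6 applies to $N$ and gives that $N$ is *-DMP.

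To finish, put $S=\left(\begin{array}{cc} 0 & I \\ I & 0 \end{array}\right)$, so that $S^{*}=S=S^{-1}$ and $M=SNS$. Then $M^{D}=SN^{D}S$ and, since $S$ is a self-adjoint symmetry, $M^{\pi}=1-MM^{D}=S(1-NN^{D})S=SN^{\pi}S$, which is again a projection because $N^{\pi}$ is. By criterion (2) of Theorem 1.1, $M$ is therefore *-DMP. I do not anticipate any real difficulty: the only nontrivial point is the relabelling check in the middle paragraph, and the rest is the routine observation that conjugation by the symmetry $S$ preserves Drazin invertibility and carries a projection spectral idempotent to a projection spectral idempotent.
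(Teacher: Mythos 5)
Your proposal is correct and follows exactly the paper's own route: the paper's proof of this corollary is the one-line "apply Theorem 4.6 to the block matrix $\left(\begin{smallmatrix} D&C\\ B&A\end{smallmatrix}\right)$, analogously to Corollary 4.3," which is precisely your relabelling check followed by conjugation with the symmetry $S$. Your written-out verification of the relabelled hypotheses and of the fact that conjugation by $S$ preserves the *-DMP property merely makes explicit what the paper leaves implicit.
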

\begin{proof} Applying Theorem 4.6 to the block matrix
$\left(
\begin{array}{cc}
D&C\\
B&A
\end{array}
\right)$, analogously to Corollary 4.3, we complete the proof.\end{proof}

\vskip10mm
\end{document}